\theoremstyle{plain}
\newtheorem{thm}{Theorem}
\newtheorem{prp}[thm]{Proposition}
\newtheorem{lem}[thm]{Lemma}
\newtheorem{rem}[thm]{Remark}
\newtheorem{defn}[thm]{Definition}
\newcommand\R{\mathbb{R}}
\newcommand\N{\mathbb{N}} 
\title{Global regularity for a logarithmically supercritical hyperdissipative dyadic equation}
\author{
  D.~Barbato\\
    \small Universit\`a di Padova\\[-.2em]
    \scriptsize\texttt{barbato@math.unipd.it}
  \and
  F.~Morandin\\
    \small Universit\`a di Parma\\[-.2em]
    \scriptsize\texttt{francesco.morandin@unipr.it}
  \and
  M.~Romito\\
    \small Universit\`a di Pisa\\[-.2em]
    \scriptsize\texttt{romito@dm.unipi.it}
}
\begin{document}

\hypersetup{pageanchor=false}
\maketitle
\hypersetup{pageanchor=true}

\begin{abstract}
  We prove global existence of smooth
  solutions for a slightly supercritical dyadic model. We consider a
  generalized version of the dyadic model introduced by
  Katz-Pavlovic~\cite{KatPav04} and add a viscosity term with critical
  exponent and a supercritical correction. 
  This model
  catches for the dyadic a conjecture that for Navier-Stokes equations
  was formulated by Tao~\cite{Tao2009}.
\end{abstract}
\section{Introduction}      

The \emph{a priori} estimate of relevant quantities is a crucial
part of the analysis of PDEs. For our purposes, the most interesting
example are the Navier--Stokes equations in dimension three. In that
case the kinetic energy and the energy dissipation are super--critical,
hence in a way negligible, quantities with respect to the scaling
invariance of the problem.
Indeed, proofs of regularity are
available only in the so--called hyper--dissipative case,
where the Laplace operator is replaced by $(-\Delta)^\alpha$
for $\alpha\geq5/4$ and this
additional dissipation makes the energy relevant again
(see for instance~\cite{MatSin1999,KatPav2002}).

In a recent paper Tao~\cite{Tao2009}
has shown that hyper--dissipativity can be slightly relaxed
by a logarithmic factor. The idea originates from the same author~\cite{Tao2007} and has been applied in other problems, mainly from
dispersive equations.
In~\cite{Tao2009} Tao adds a small correction to the hyper--dissipative
term, replacing $(-\Delta)^{5/4}$ with
\[
  \frac{(-\Delta)^{{5/4}}}{g((-\Delta)^{1/2})^2},
\]
and provides a simple and neat proof of global existence if
$\int 1/(sg(s)^4)=\infty$. He then suggests that
the same result should hold, based on some heuristics on the flow
of energy, under the weaker condition $\int 1/(sg(s)^2)=\infty$.

The aim of this paper is to prove Tao's conjecture for the dyadic
model, a simplified version of the Navier--Stokes equations, that
nevertheless has shown to be an effective tool in the understanding of
the full Navier--Stokes problem~\cite{Tao2014}. In particular, we
believe that the main result of our paper (Theorem~\ref{thm:main})
gives a complete answer to some questions raised in Remark 5.2
of~\cite{Tao2014}. As a bonus result, in
Section~\ref{s:generalization_to_nonconstant_phi} we prove that the
conjecture in~\cite{Tao2009} is true for the vector-valued dyadic
model introduced in~\cite{Tao2014}.  A proof of the conjecture for the
full Navier--Stokes equations is a work in progress.
\subsection{The model}

Given $\beta>0$ and two real sequences $\phi=(\phi_n)_{n\geq1}$
and $g = (g_n)_{n\geq1}$, with $\phi$ bounded and $g$ positive,
set $k_n=2^{\beta n}$ for $n\geq0$. Consider the
critical hyper--dissipative generalized dyadic model,
\begin{equation}\label{e:main}
  \begin{cases}
    X_n'
      = \phi_{n-1}k_{n-1}X_{n-1}^2-\phi_nk_nX_nX_{n+1}-\frac1{g_n}k_nX_n,\\
    X_n(0)
      = x_n,
  \end{cases}
  \qquad t>0, n\geq1,
\end{equation}
where $X=(X_n)_{n\geq0}$ is a family of real functions,
$X_0\equiv0$ and $x=(x_n)_{n\geq1}$ is the given initial
condition. 

The classical critical regime here corresponds to $g\equiv1$.
Tao's statement for Navier-Stokes equation, transposed on
our model, works whenever $\sum_ng_n^{-2}=\infty$
($g_n=\sqrt n$ for instance), while the conjecture, on our model,
states that global regularity should hold for
$\sum_ng_n^{-1}=\infty$ (e. g. $g_n=n$).

The role of the coefficients $\phi$ is to break the structure
 of the
non--linearity. Otherwise, as shown in~\cite{BarMorRom2011},
 if
$\phi\equiv1$, the energy flow is very steady, in the sense that the
transfer of energy from $X_n$ to $X_{n+1}$ starts before $X_{n-1}$ is
discharged enough and this allows to prove regularity in a full
supercritical regime. Further generalizations
are possible, see Section~\ref{s:remarks}.

The dyadic model has been introduced in~\cite{KatPav04} and
analyzed in several other works~\cite{CheFriPav2007,CheFriPav2010,BarFlaMor2010CRAS,BarFlaMor2011TAMS}.
The model with viscosity has been initially introduced
in~\cite{Cheskidov08} and further analyzed in~\cite{CheFri09,BarMorRom2011}.
\subsection{The dyadic version of~\texorpdfstring{\cite{Tao2009}}{[Tao2009]}}

It is easy to be convinced that Tao's condition $\int1/(sg(s)^4)=\infty$
reads in our case as $\sum 1/g_n^2=\infty$. To this end, we reproduce
in this section the idea of~\cite{Tao2009} adapted to the dyadic
framework. Assume also, as we do, that $(g_n)_{n\geq1}$ and
$(k_n/g_n)_{n\geq1}$ are non--decreasing. Assume moreover, for
simplicity, that $g_n = g(n)$, where $g$ is non--decreasing,
continuous, non--zero on $[0,\infty)$ and $\int g(x)^{-2} = \infty$.

Given a solution $X$, set for $s\geq1$,
\[
  a(t)
    = \sum_{n=1}^\infty \frac{k_n}{g_n}X_n^2,
      \qquad
  A(t)
    = \sum_{n=1}^\infty k_n^{2s}X_n^2,
      \qquad
  B(t)
    = \sum_{n=1}^\infty \frac{k_n^{2s+1}}{g_n}X_n^2.
\]
We know by the energy estimate that $a\in L^1([0,\infty))$.
By differentiating and using the Cauchy--Schwartz and Young inequalities,
\[
  \frac{d}{dt}A + 2B
    = 2(2^{2\beta s}-1)\sum_{n=1}^\infty \phi_n k_n^{2s+1}X_n^2X_{n+1}
    \leq B + c\sum_{n=1}^\infty g_nk_n^{2s+1}X_n^2X_{n+1}^2.
\]
Split the sum on the right--hand side in a sum $\mathtt{[L]}$ up to $N$
and in a sum $\mathtt{[H]}$ from $N$ on, where $N$ will be chosen at the
end. On the one hand,
\[
  \mathtt{[L]}
    = \sum_{n=1}^N g_n^2 \Bigl(\frac{k_n}{g_n}X_n^2\Bigr)(k_n^{2s}X_{n+1}^2)
    \leq cg_N^2 a A,
\]
on the other hand
\[
  \mathtt{[H]}
    = \sum_{n\geq N} \frac{g_n}{k_n}(k_n^{s+1}X_n^2)(k_n^{s+1}X_{n+1}^2)
    \leq \frac{g_N}{k_N}A^2.
\]
If we choose $N$ so that $k_N\approx A$, that is $N\approx\log A$,
we have
\[
  \dot A
    \leq c(1+a)g(\log A)^2 A,
\]
whose solutions stay bounded on bounded sets.
\subsection{The dyadic version of Tao's conjecture}

We present here a heuristic argument that shows, as in Remark 1.2
of~\cite{Tao2009}, that the weaker assumption $\sum_n g_n^{-1}=\infty$
is sufficient for global regularity.

Indeed, let $X$ be a weak solution on $[0,T)$ and consider a blow--up
scenario in $T$: at some time $t$ the energy of solution is concentrated
in $n,n+1,\ldots n+m$ and  $n\to\infty$ when $t\to T$. The balance of
energy on $n,\ldots,n+m$ yields:
\[
  \frac{d}{dt}\Bigl(\frac12\sum_{i=n}^{n+m} X_i^2\Bigr)
    = \phi_{n-1}k_{n-1}X_{n-1}^2X_n
      - \phi_mk_mX_{n+m}^2X_{n+m+1}
      - \sum_{i=n}^{n+m} \frac{k_i}{g_i}X_i^2,
\]
where we could imagine $\phi_{n-1}k_{n-1}X_{n-1}^2X_n$ as the
energy moving from $n-1$ to $n$, $\phi_mk_mX_{n+m}^2X_{n+m+1}$
the energy moving from $n+m$ to $n+m+1$, and $\frac{k_i}{g_i}X_i^2$
the energy dissipated in $i$. So, roughly speaking, $k_nX_n^3$
is the speed at which the energy moves from $n$ to $n+1$, whereas
$\frac{k_n}{g_n}X_n^2$ is the speed at which the energy is dissipated
in $n$.

Now in the blow--up scenario, to go to high ``$n$''s, the energy has to go
through all the states. The ratio between the energy dissipated and
the energy that goes through $n$ is $\frac{1}{g_nX_n}\geq\frac{C}{g_n}$.
So, to have a non--trivial amount of energy reaching the infinite state,
we have to require $\sum g_n^{-1}<\infty$.

Our proof is a rigorous version of the above argument. We find a recursive
formula~\eqref{e:dn_bound_claim} for the tail energy and dissipation.
Then we prove that any sequence satisfying the recursion decays
super--exponentially fast.
\section{Preliminaries}

\subsection{Basic definitions}

\begin{defn}
A weak solution is a sequence of $X=(X_n)_{n\geq1}$ of differentiable
functions on all $[0,\infty)$, satisfying~\eqref{e:main}.
\end{defn}

Whenever $X$ denotes a weak solution, $E_n(t)$ and $F_n(t)$ will
denote the energy of the tails: for all $n\geq1$ and $t\geq0$,
\[
E_n(t)
:=\sum_{i\leq n}X_i^2(t)
<\infty
,\qquad\text{and}\qquad
F_n(t)
:=\sum_{i\geq n}X_i^2(t)
\leq\infty.
\]
We will also denote by $E$ the total energy of the solution $X$: for
all $t\geq0$,
\[
E(t)
:=\sum_{n\geq 1}X_n^2(t)
=\lim_{n\to\infty}E_n(t)
=\|X(t)\|_H^2.
\]
Clearly
$E(t)=E_n(t)+F_{n+1}(t)$ for all $n\geq1$.
From~\eqref{e:main} we get
\[
\frac d{dt}(X_n^2)=2\phi_{n-1}k_{n-1}X_{n-1}^2X_n-2\phi_nk_nX_n^2X_{n+1}-\frac2{g_n}k_nX_n^2,
\]
so that if $X$ is a weak solution, for all $n\geq1$,
\begin{equation}\label{e:E_n'}
E_n'
=-2\phi_nk_nX_n^2X_{n+1}-\sum_{i\leq n}\frac2{g_i}k_iX_i^2.
\end{equation}
To compute the variation of $F_n$ we need an extra condition
on solutions.

\begin{defn}\label{d:anomalous}
A weak solution $X$ satisfies the \emph{energy inequality}
on $[0,T]$ if
\begin{equation}\label{e:no_anom_diss_condition}
E(t) + \int_0^t\sum_{n\geq1}\frac2{g_n}k_nX_n^2(s)ds
\leq E(0)
,\qquad t\in[0,T].
\end{equation}
A weak solution satisfies the \emph{energy equality}
if there is equality in the above formula.
\end{defn}

We remark that, as is expected in this class of problems,
regularity readily implies uniqueness and that the energy
inequality holds (there is no anomalous dissipation).
The vice versa is not true in general (see for instance~\cite{BarFlaMor2010CRAS,BarMor2013nodea}).

By~\eqref{e:E_n'} and~\eqref{e:no_anom_diss_condition} it follows that,
if $X$ satisfies the energy inequality, then
\begin{equation}\label{e:Fn_deriv}
F_n(t)
  \leq F_n(0)
    + \int_0^t 2\phi_{n-1}k_{n-1}X_{n-1}^2X_n\,ds
    - \int_0^t \sum_{i\geq n}\frac2{g_i}k_iX_i^2\,ds.
\end{equation}

The following proposition gives a sufficient condition for the energy equality.
\begin{prp}\label{p:L^3W^3_no_anom_diss}
Let $T>0$ and $X$ be a weak solution with initial condition $x\in
H$. If $X\in L^3([0,T];W^{\beta/3,3})$, then $X$ satisfies the
energy equality on $[0,T]$.
\end{prp}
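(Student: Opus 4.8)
The plan is to prove the energy equality by showing that under the regularity hypothesis $X \in L^3([0,T]; W^{\beta/3,3})$, the nonlinear energy flux terms telescope and vanish in an appropriate limit, so that no energy is lost in the transfer between shells. Let me sketch how I'd approach this.

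First, let me understand the norm. The space $W^{\beta/3,3}$ with the dyadic weights $k_n = 2^{\beta n}$ should correspond to the norm $\|X\|_{W^{\beta/3,3}}^3 \approx \sum_n (k_n^{\beta/3 \cdot (1/\beta)})^3 |X_n|^3 = \sum_n k_n |X_n|^3$, since the $s$-derivative in these dyadic models multiplies the $n$-th component by $k_n^{s}$ (with the convention that $k_n = 2^{\beta n}$ already encodes one full derivative). So the hypothesis is that $\int_0^T \sum_n k_n |X_n|^3 \, ds < \infty$. This is exactly the borderline integrability needed to control the flux.

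\smallskip

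First I would write down the finite-$n$ energy identity exactly. Summing \eqref{e:E_n'} is already an equality for each truncation $E_n$; the point is that the energy equality for the full energy $E$ is equivalent to showing that the flux term $\phi_n k_n X_n^2 X_{n+1}$ telescopes to zero in the limit. Concretely, from \eqref{e:main} one has the exact identity
\[
  E_n(t) + \int_0^t \sum_{i\le n}\frac{2}{g_i}k_i X_i^2\,ds
    = E_n(0) - \int_0^t 2\phi_n k_n X_n^2 X_{n+1}\,ds,
\]
which is a genuine equality because differentiating a \emph{finite} sum is unproblematic. Thus it suffices to show that the boundary flux
\[
  \Phi_n(t) := \int_0^t 2\phi_n k_n X_n^2 X_{n+1}\,ds
\]
tends to $0$ as $n\to\infty$, for then letting $n\to\infty$ and using monotone convergence on the dissipation sum yields the energy \emph{equality} directly (not merely the inequality).

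\smallskip

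Second I would bound $\Phi_n$ using the assumed $L^3$ regularity. Since $\phi$ is bounded, $|\phi_n k_n X_n^2 X_{n+1}| \le c\, k_n |X_n|^2 |X_{n+1}|$. Applying Hölder with exponents $(3,3,3)$ and matching the dyadic weights carefully — writing $k_n = k_n^{1/3} k_n^{1/3} k_n^{1/3}$ and comparing $k_n \approx k_{n+1}$ up to the constant $2^{\beta}$ — one gets pointwise in time
\[
  k_n |X_n|^2 |X_{n+1}|
    \le c\,(k_n|X_n|^3)^{2/3}(k_{n+1}|X_{n+1}|^3)^{1/3}.
\]
Summing over a tail and using Hölder again in the summation index shows that the tail of $\sum_n k_n|X_n|^3$ controls the flux. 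Precisely, integrating in time and applying Hölder in $t$ with the $L^3_t$ norm, $\Phi_n(t)$ is bounded by quantities of the form $\bigl(\int_0^t k_n|X_n|^3\bigr)^{2/3}\bigl(\int_0^t k_{n+1}|X_{n+1}|^3\bigr)^{1/3}$. Since $X \in L^3([0,T];W^{\beta/3,3})$ means $\sum_n \int_0^T k_n|X_n|^3\,ds < \infty$, the individual summands $\int_0^T k_n|X_n|^3\,ds$ form a summable, hence null, sequence; therefore $\Phi_n(t)\to 0$. This closes the argument.

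\smallskip

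The main obstacle I anticipate is matching the dyadic $W^{\beta/3,3}$ norm to the flux term with exactly the right powers of $k_n$, and making the Hölder bookkeeping rigorous so that the $L^3_t W^{\beta/3,3}_x$ hypothesis is used at its critical scaling with no loss. A secondary technical point is justifying that one may pass to the limit $n\to\infty$ in both the dissipation integral (handled by monotone convergence, as the integrand is nonnegative) and the flux (handled by the vanishing of $\Phi_n$), and confirming that the resulting limit is genuinely an equality rather than just the inequality \eqref{e:no_anom_diss_condition}. Care is also needed that $\int_0^t k_n|X_n|^3\,ds$ is finite for each fixed $n$ so the finite-$n$ identity is legitimate; this follows from the weak-solution regularity of each component together with the integrability hypothesis.
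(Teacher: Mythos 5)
Your proposal is correct and follows essentially the same route as the paper: the exact truncated energy identity, reduction to showing the boundary flux $\int_0^t k_n X_n^2 X_{n+1}\,ds$ vanishes as $n\to\infty$, and control of that flux by the tail terms of the convergent series $\sum_n\int_0^T k_n|X_n|^3\,ds$. The only cosmetic difference is that the paper splits the flux via Young's inequality, $k_nX_n^2|X_{n+1}|\leq\tfrac23 k_n|X_n|^3+\tfrac13 k_{n+1}|X_{n+1}|^3$, where you use H\"older with exponents $(3,3,3)$ in $n$ and then in $t$; both yield the same conclusion.
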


\begin{proof}
Let $t\in[0,T]$ and $n\geq1$. By equation~\eqref{e:E_n'},
\begin{equation}\label{star}
0
\leq E_n(t)+\int_0^t\sum_{i\leq n}\frac2{g_i}k_iX_i^2(u)du
=E_n(0)-\int_0^t2\phi_nk_nX_n^2(u)X_{n+1}(u)du.
\tag{$\star$}
\end{equation}
To prove the energy equality, it is sufficient to
take the limit for $n\to\infty$ and show that the last term
of~\eqref{star} converges to zero. By Young's inequality,
\[
\int_0^tk_nX_n^2(u)|X_{n+1}(u)|du
\leq\frac23\int_0^tk_n|X_n(u)|^3du
+\frac13\int_0^tk_{n+1}|X_{n+1}(u)|^3du,
\]
and the terms on the right--hand side converge to zero,
since $X\in L^3([0,T];W^{\beta/3,3})$.
\end{proof}

\subsection{Local existence and uniqueness}

For all $s\in\R$ and $p\geq1$, let $W^{s,p}$ denote the Banach space
\[
  W^{s,p}
    = \bigl\{x=(x_n)_{n\geq1}\in\R^\N:
      \|x\|_{W^{s,p}}^p:=\sum_{n\geq1}2^{psn}|x_n|^p<\infty\bigr\}.
\]
In particular, we set  $H^s = W^{s,2}$ and $H:=H^0=\ell^2(\R)$.

\begin{prp}\label{p:local_exist_uniq}
Let $s>0$ and suppose $x\in H^s$, $g\in H^{-s}$. Then there exist
$\eta>0$, depending only on $\|x\|_{H^s}$, and a unique solution
in the class $\mathcal H:=L^\infty([0,\eta];H^s)$.
\end{prp}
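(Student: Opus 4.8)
The plan is to prove local existence and uniqueness in $\mathcal{H}=L^\infty([0,\eta];H^s)$ by a standard Picard/fixed-point argument on the integral (mild) formulation of~\eqref{e:main}. First I would rewrite the system by separating the linear dissipative part from the quadratic nonlinearity. Writing $\Lambda_n := \frac{1}{g_n}k_n$ for the dissipation rate, the equation becomes $X_n' = -\Lambda_n X_n + N_n(X)$, where $N_n(X) = \phi_{n-1}k_{n-1}X_{n-1}^2 - \phi_n k_n X_n X_{n+1}$ is the bilinear term. Using the integrating factor $e^{-\Lambda_n t}$ one obtains the mild formulation
\[
  X_n(t) = e^{-\Lambda_n t}x_n + \int_0^t e^{-\Lambda_n(t-r)}N_n(X(r))\,dr,
\]
and a fixed point of the associated map $\mathcal{T}$ on a suitable ball in $\mathcal{H}$ is exactly a local solution.

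The key estimate is to control $\|N(X)\|_{H^s}$ in terms of $\|X\|_{H^s}$. Because $N$ is bilinear and each term carries one factor of $k_{n-1}=2^{\beta(n-1)}$ or $k_n=2^{\beta n}$, the nonlinearity costs $\beta$ derivatives; the boundedness of $\phi$ removes the $\phi$-dependence. The point is that the dyadic structure is local in frequency, so the product $X_n X_{n+1}$ and the square $X_{n-1}^2$ can be estimated by an $\ell^2$-$\ell^\infty$ Hölder split together with the Sobolev-type embedding $H^s \hookrightarrow \ell^\infty$ valid for $s>0$. Concretely I would show $\|N(X)\|_{H^{s-\beta}} \lesssim \|X\|_{H^s}^2$ (or the analogous bound with an appropriate target norm), so that the nonlinearity loses $\beta$ derivatives relative to $H^s$. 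The loss is compensated either by the regularizing action of the semigroup $e^{-\Lambda_n t}$, since $k_n^\beta e^{-\Lambda_n t}$ gains a factor $t^{-\beta/\text{(scaling)}}$ that is integrable in time near $r=t$, or — more cheaply — by simply absorbing the finite factor $k_n^\beta$ into the Picard iteration on a short time interval without using smoothing, provided one keeps $H^s$ as the working space throughout. Since the statement only claims $\mathcal{H}=L^\infty([0,\eta];H^s)$ (no gain of regularity is asserted), I expect the latter, elementary route to suffice.

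With the bilinear estimate in hand, I would run the contraction argument: fix $R \approx \|x\|_{H^s}$, let $\mathcal{B}_R$ be the closed ball of radius $R$ in $\mathcal{H}$, and show that for $\eta$ small (depending only on $R$, hence only on $\|x\|_{H^s}$, as claimed) the map $\mathcal{T}$ sends $\mathcal{B}_R$ into itself and is a contraction there. Self-mapping follows from $\|\mathcal{T}X\|_{\mathcal{H}} \leq \|x\|_{H^s} + C\eta R^2 \leq R$ once $C\eta R \leq \tfrac12$ (using $\|e^{-\Lambda_\cdot t}x\|_{H^s}\le\|x\|_{H^s}$ and the time-integrated bilinear bound). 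Contraction follows from the bilinear difference estimate $\|N(X)-N(Y)\|\lesssim(\|X\|+\|Y\|)\|X-Y\|$, giving $\|\mathcal{T}X-\mathcal{T}Y\|_{\mathcal{H}} \leq C\eta R\,\|X-Y\|_{\mathcal{H}} \leq \tfrac12\|X-Y\|_{\mathcal{H}}$. The Banach fixed-point theorem then yields a unique fixed point in $\mathcal{B}_R$, and a standard argument (comparing any two solutions via the same difference estimate and Gronwall) upgrades this to uniqueness in the whole class $\mathcal{H}$.

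The main obstacle I anticipate is the bilinear estimate controlling the derivative loss: one must verify that the $2^{\beta n}$ weights arising from $k_{n-1},k_n$ interact correctly with the $2^{sn}$ weights defining $H^s$ and that the cross term $X_nX_{n+1}$ as well as the quadratic $X_{n-1}^2$ are genuinely bounded in the right weighted $\ell^2$ norm. The hypothesis $g\in H^{-s}$ enters here and in the semigroup estimates to ensure the dissipation coefficients $\Lambda_n = k_n/g_n$ do not grow too fast relative to the $H^s$-weights, keeping the linear flow a contraction semigroup on $H^s$; one should check this compatibility condition carefully, since it is exactly what ties the admissible $g$ to the chosen regularity index $s$.
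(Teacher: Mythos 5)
Your setup (mild formulation via the integrating factor, contraction on a ball of $\mathcal H$, uniqueness from the bilinear difference estimate) is exactly the paper's skeleton, and your bilinear bound $\|N(X)\|_{H^{s-\beta}}\lesssim\|X\|_{H^s}^2$ is correct. The gap is in the step you single out as the one you expect to use: recovering the loss of $\beta$ derivatives by ``simply absorbing the finite factor $k_n^\beta$ into the Picard iteration on a short time interval without using smoothing.'' This cannot work. The lost factor is $k_n=2^{\beta n}$, which is unbounded in $n$, while shrinking $\eta$ only buys a constant factor, uniform in $n$. Concretely, if you bound $e^{-\Lambda_n(t-r)}\le 1$, the nonlinear part of $\mathcal{T}X$ is controlled in $H^s$ by (a constant times) $\eta\,\|\phi\|_{\ell^\infty}\|X\|_{\mathcal H}^2\bigl(\sum_{n}2^{2\beta n}2^{-2sn}\bigr)^{1/2}$, and the series diverges for every $s\le\beta$. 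So the ``cheap'' route proves the proposition only when $s>\beta$, whereas the statement claims all $s>0$; moreover it would never use the hypothesis $g\in H^{-s}$, which is a clear sign it is not the intended argument.

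The correct way to close the gap --- and it is how the paper does it, making precise the first alternative you mention and then discard --- is to integrate the semigroup exactly: $\int_0^t e^{-\frac{k_n}{g_n}(t-u)}\,du=\frac{g_n}{k_n}\bigl(1-e^{-\frac{k_n}{g_n}t}\bigr)$. Since the nonlinearity is bounded in time by $\sup$-norms, this gains the full factor $g_n/k_n$, which cancels the derivative loss $k_n$ and leaves a factor $g_n$. The self-map and contraction estimates then reduce to controlling a quantity of the form $L(\eta)^2\approx\sum_n g_n^2\,2^{-2sn}\bigl(1-e^{-\frac{k_n}{g_n}\eta}\bigr)^2$, which is finite precisely because $g\in H^{-s}$, and which tends to $0$ as $\eta\to0$ by splitting the sum: the tail is small by convergence of $\sum_n g_n^2 2^{-2sn}$, and the head (finitely many terms) is small using $1-e^{-\Lambda_n\eta}\le\Lambda_n\eta$. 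Note that the smallness in $\eta$ comes from this splitting, not from a plain factor of $\eta$ as in your self-map inequality. Finally, your reading of the hypothesis $g\in H^{-s}$ is backwards: the linear semigroup $e^{-\Lambda_n t}$ is a contraction on every $H^s$ for trivial reasons ($\Lambda_n=k_n/g_n>0$), so nothing needs to be ``kept'' there; what $g\in H^{-s}$ really does is cap the growth of $g_n$ (equivalently, give a lower bound on the dissipation rates $\Lambda_n$), ensuring that the leftover factor $g_n$ produced by the semigroup gain is square-summable against the weights $2^{-2sn}$.
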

\begin{proof}
In view of applying Banach's fixed point theorem, we introduce the
operator $\mathcal F$ on $\mathcal H$ defined as follows. For all
$n\geq1$ and $t\in[0,\eta]$, let
\[
(\mathcal FV)_n(t)
:=x_ne^{-\frac{k_n}{g_n}t}+\int_0^te^{-\frac{k_n}{g_n}(t-u)}\Bigl[\phi_{n-1}k_{n-1}V_{n-1}^2(u)-\phi_nk_nV_n(u)V_{n+1}(u)\Bigr]du,
\]
so that $X$ is a solution if and only if it is a fixed point of
$\mathcal F$.  To apply Banach's fixed point theorem we must show that
$\mathcal F$ maps some ball $B_{\mathcal H}(M):=\{v\in\mathcal
H:\|v\|_{\mathcal H}\leq M\}$ into itself and that $\mathcal F$ is a
contraction on the ball.
To this end, we will often use that if $v\in\mathcal H$, then $|v_n(t)|\leq
k_n^{-s}\|v\|_{\mathcal H}$ for all $t\geq0$, $n\geq1$.

We deal with the first requirement, so suppose $V\in B_{\mathcal
  H}(M)$. For all $n\geq1$ and $t\in[0,\eta]$,
\begin{align*}
|(\mathcal FV)_n(t)|
&\leq |x_n|e^{-\frac{k_n}{g_n}t}+\|\phi\|_{\ell^\infty}\int_0^te^{-\frac{k_n}{g_n}(t-u)}\Bigl[k_{n-1}V_{n-1}^2+k_n|V_nV_{n+1}|\Bigr]du\\
&\leq |x_n|e^{-\frac{k_n}{g_n}t}+\|\phi\|_{\ell^\infty}(k_{n-1}^{1-2s}+k_n^{1-s}k_{n+1}^{-s})\|V\|^2\int_0^te^{-\frac{k_n}{g_n}(t-u)}du\\
&\leq |x_n|+2\|\phi\|_{\ell^\infty}k_{n-1}^{-2s}g_n(1-e^{-\frac{k_n}{g_n}t})\|V\|^2,
\end{align*}
so $\|\mathcal FV\|_{\mathcal H}\leq \|x\|_{H^s}+2\|\phi\|_{\ell^\infty}\|V\|^2L(\eta)$,
where we defined
\[
L(t)
:=\biggl[\sum_{n\geq1}k_n^{2s}k_{n-1}^{-4s}g_n^2(1-e^{-\frac{k_n}{g_n}t})^2\biggr]^{1/2},
\]
and $\sup_{0\leq t\leq\eta}L(t)=L(\eta)$ by monotonicity. We claim
that $\lim_{\eta\to0}L(\eta)=0$. Consider
\[
L^2(\eta)
=2^{4\beta s}\sum_{n\geq1}k_n^{-2s}g_n^2(1-e^{-\frac{k_n}{g_n}\eta})^2
\leq2^{4\beta s}\sum_{n=1}^Nk_n^{-2s}g_n^2\biggl(\frac{k_n}{g_n}\eta\biggr)^2+2^{4\beta s}\sum_{n>N}k_n^{-2s}g_n^2.
\]
Since $g\in H^{-s}$, we can choose $N$ such that the second term is
arbitrarily small, and then choose $\eta$ in such a way that the first
term is small too, hence $L(\eta)\to0$ as $\eta\to0$.

Let $M:=2\|x\|_{H^s}$. If $\eta$ is small enough so that
$L(\eta)\leq(4\|\phi\|_{\ell^\infty}M)^{-1}$, then
$\|\mathcal FV\|_{\mathcal H}\leq M/2+2\|\phi\|_{\ell^\infty}M^2L(\eta)\leq M$,
so the first requirement is satisfied for all $\eta$ such that
$L(\eta)\leq(8\|\phi\|_{\ell^\infty}\|x\|_{H^s})^{-1}$.

To prove that $\mathcal F$ is a contraction, suppose $X,Y\in B_{\mathcal
  H}(M)$.  For all $n\geq1$ and $t\in[0,\eta]$,
\[
|(\mathcal FX-\mathcal FY)_n(t)|
\leq\|\phi\|_{\ell^\infty}\int_0^te^{-\frac{k_n}{g_n}(t-u)}\Bigl[k_{n-1}\bigl|X_{n-1}^2-Y_{n-1}^2\bigr|+k_n\bigl|X_nX_{n+1}-Y_nY_{n+1}\bigr|\Bigr]du.
\]
With the obvious decomposition
$ab-cd=\frac12(a-c)(b+d)+\frac12(b-d)(a+c)$ and recalling that for all
$j$, $|v_j(t)|\leq k_j^{-s}\|v\|_{\mathcal H}$, we get
\begin{align*}
|(\mathcal FX-\mathcal FY)_n(t)|
&\leq2M\|\phi\|_{\ell^\infty}\|X-Y\|_{\mathcal H}\Bigl[k_{n-1}k_{n-1}^{-2s}+k_nk_n^{-s}k_{n+1}^{-s}\Bigr]\int_0^te^{-\frac{k_n}{g_n}(t-u)}du\\
&\leq4M\|\phi\|_{\ell^\infty}\|X-Y\|_{\mathcal H}k_{n-1}^{-2s}g_n(1-e^{-\frac{k_n}{g_n}t}),
\end{align*}
hence
\[
\|\mathcal FX-\mathcal FY\|_{\mathcal H}
\leq4M\|\phi\|_{\ell^\infty}\|X-Y\|_{\mathcal H}L(\eta).
\]
Let $\theta\in(0,1)$. Choose $\eta$ small enough that
$L(\eta)\leq\theta(8\|\phi\|_{\ell^\infty}\|x\|_{H^s})^{-1}$. Then the
first requirement is satisfied and
$\|\mathcal FX-\mathcal FY\|_{\mathcal H}\leq\theta\|X-Y\|_{\mathcal H}$,
and we conclude by Banach's fixed point theorem.
\end{proof}

\section{The main result}

In this section we prove our main result. The theorem follows immediately
from our Theorem~\ref{thm:main_gen}, which works in a slightly more
general setting.

\begin{thm}\label{thm:main}
Suppose that $g_n$ is non-decreasing, $\frac{k_n}{g_n}$ is eventually
non-decreasing and that $\sum_{n\geq1}g_n^{-1}=\infty$. If $x\in H^s$
for all $s>0$, then there exists a solution $X$ with initial condition
$x$ such that $X\in L^\infty([0,\infty);H^s)$ for all $s>0$. This
  solution is unique in the class
  $L^3_{\textup{loc}}([0,\infty);W^{\beta/3,3})$.
\end{thm}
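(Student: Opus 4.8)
The plan is to combine the local theory of Proposition~\ref{p:local_exist_uniq} with a global \emph{a priori} bound obtained by turning the heuristic of the previous subsection into a rigorous recursion for the tail dissipation. First I would note that, since $x\in H^s$ for every $s>0$ and $g_n=g(n)$ grows at most polynomially, one has $g\in H^{-s}$ for every $s>0$; hence Proposition~\ref{p:local_exist_uniq} produces, for each $s$, a unique local solution, and by uniqueness these agree and define a single solution $X$ on a maximal interval $[0,T^\ast)$ lying in $H^s$ for all $s$. Such an $X$ is regular, so (via the embedding $H^s\hookrightarrow W^{\beta/3,3}$ for $s\geq\beta$ and Proposition~\ref{p:L^3W^3_no_anom_diss}) it satisfies the energy equality, and in particular the energy inequality~\eqref{e:no_anom_diss_condition}, on every $[0,t]\subset[0,T^\ast)$; this is what makes~\eqref{e:Fn_deriv} available.

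The core is the \emph{a priori} estimate. For $t<T^\ast$ set $b_n(t):=\sup_{[0,t]}F_n$ and the tail dissipation $d_n(t):=\int_0^t\sum_{i\geq n}\frac2{g_i}k_iX_i^2\,ds$, so that $d_1(t)\leq E(0)$ for all $t$ by~\eqref{e:no_anom_diss_condition}. Dropping $F_n(t)\geq0$ in~\eqref{e:Fn_deriv} gives $d_n(t)\leq F_n(0)+\int_0^t2\phi_{n-1}k_{n-1}X_{n-1}^2X_n\,ds$, and the essential move is to absorb the inflow into the dissipation of mode $n-1$: writing $k_{n-1}X_{n-1}^2|X_n|=\bigl(\tfrac{k_{n-1}}{g_{n-1}}X_{n-1}^2\bigr)\,g_{n-1}|X_n|$ and bounding $|X_n|\leq\sqrt{b_n(t)}$ yields
\[
  \int_0^t 2\phi_{n-1}k_{n-1}X_{n-1}^2X_n\,ds
    \leq \|\phi\|_{\ell^\infty}\,g_{n-1}\sqrt{b_n(t)}\,\bigl(d_{n-1}(t)-d_n(t)\bigr).
\]
The same bound controls $b_n(t)$, and solving the resulting quadratic inequality for $\sqrt{b_n}$ leaves a closed recursion of the form
\[
  d_n
    \leq F_n(0)
      + \|\phi\|_{\ell^\infty}g_{n-1}(d_{n-1}-d_n)\sqrt{F_n(0)}
      + \|\phi\|_{\ell^\infty}^2\,g_{n-1}^2\,(d_{n-1}-d_n)^2,
\]
with $F_n(0)=\sum_{i\geq n}x_i^2$ decaying super--exponentially because $x\in\bigcap_s H^s$. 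This is the recursion announced in the introduction.

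The heart of the matter, and the step I expect to be the main obstacle, is showing that any non--increasing sequence $(d_n)$ obeying this recursion decays super--exponentially, \emph{with constants independent of $t$}. Ignoring the (harmless, super--exponentially small) initial--data terms, the recursion reads $d_n\lesssim g_{n-1}^2(d_{n-1}-d_n)^2$, i.e.\ $d_{n-1}-d_n\gtrsim \sqrt{d_n}/g_{n-1}$. As long as $d_n$ stays above the threshold $g_{n-1}^{-2}$ one checks $\sqrt{d_{n-1}}-\sqrt{d_n}\gtrsim g_{n-1}^{-1}$; summing and using $\sqrt{d_1}\leq\sqrt{E(0)}$ together with $\sum_n g_n^{-1}=\infty$ forces $d_n$ below the threshold after a number of steps $N_0$ depending only on $E(0)$, hence uniform in $t$. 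This is precisely where the conjecture's hypothesis, rather than $\sum g_n^{-2}=\infty$, is used. Once $d_{n}\lesssim g_{n}^{-2}$ the recursion becomes the quadratic $d_n\lesssim g_{n-1}^2d_{n-1}^2$; since $g_n/g_{n-1}\to1$, setting $e_n:=cg_n^2d_n$ gives $e_n\lesssim e_{n-1}^2$, so $e_n$ (and a fortiori $d_n$) decays doubly exponentially. The same bound then transfers to $b_n=\sup_{[0,t]}F_n$, uniformly in $t<T^\ast$.

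Finally I would collect the consequences. Super--exponential decay of $b_n$ gives, for every $s>0$, $\sup_{t<T^\ast}\|X(t)\|_{H^s}^2\leq\sum_n k_n^{2s}b_n<\infty$; since the local existence time in Proposition~\ref{p:local_exist_uniq} depends only on this norm, the solution cannot blow up and $T^\ast=\infty$, yielding $X\in L^\infty([0,\infty);H^s)$ for all $s$. For uniqueness in $L^3_{\mathrm{loc}}([0,\infty);W^{\beta/3,3})$, any solution $Y$ in that class satisfies the energy equality by Proposition~\ref{p:L^3W^3_no_anom_diss}; comparing it with the regular solution $X$ constructed above and running a Gronwall estimate on $\frac{d}{dt}\sum_n(X_n-Y_n)^2$, where the quadratic differences $k_{n-1}(X_{n-1}^2-Y_{n-1}^2)$ and $k_n(X_nX_{n+1}-Y_nY_{n+1})$ are controlled using the $W^{\beta/3,3}$ bound as the integrable--in--time coefficient, forces $X=Y$.
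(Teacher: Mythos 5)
Your proposal follows the same skeleton as the paper (local solution from Proposition~\ref{p:local_exist_uniq}, energy equality via Proposition~\ref{p:L^3W^3_no_anom_diss}, a uniform-in-time tail estimate, then continuation past any finite time), but the heart of it --- the a priori estimate --- is genuinely different. The paper works with the combined quantity $d_n^2(t)=F_n(t)+\int_0^t\sum_{i\geq n+1}\tfrac{2}{g_i}k_iX_i^2\,ds$, bounds the inflow by Cauchy--Schwarz against the dissipation at levels $n-1$ and $n$ (which is at most $\tfrac12(d_{n-2}^2-d_n^2)$), and so obtains the \emph{linear} gap-two recursion~\eqref{e:dn_bound_claim} with contraction factor $C_n(\bar d_{n-1})<1$; the decay of the resulting bounding sequence is then extracted through the subsequence $(n_k)$ of Definition~\ref{def:special_subs} and Lemmas~\ref{l:yn_goes_zero}--\ref{l:bound_y_n}. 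You instead keep tail energy $b_n$ and tail dissipation $d_n$ separate and derive a \emph{quadratic} recursion $d_n\lesssim F_n(0)+g_{n-1}\sqrt{F_n(0)}\,(d_{n-1}-d_n)+g_{n-1}^2(d_{n-1}-d_n)^2$, analyzed in two phases: above the threshold $d_n\sim g_{n-1}^{-2}$ one gets $\sqrt{d_{n-1}}-\sqrt{d_n}\gtrsim g_{n-1}^{-1}$ (treating separately the case $d_n\leq d_{n-1}/2$, where the step itself is a definite decrease), so $\sum_n g_n^{-1}=\infty$ and $\sqrt{d_1}\leq\sqrt{E(0)}$ force the threshold to be crossed within $N_0=N_0(E(0),g)$ steps, uniformly in $t$; below the threshold the recursion self-improves doubly exponentially. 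I checked the derivation of your recursion from~\eqref{e:Fn_deriv} and~\eqref{e:no_anom_diss_condition} and the two-phase analysis; this is a sound and arguably more direct alternative to the paper's $(n_k)$ machinery, though it uses $x\in\bigcap_s H^s$ from the start, whereas the paper's Theorem~\ref{thm:main_gen} isolates a statement for a single $s>\beta/3$.

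There are, however, two genuine gaps. First, you invoke ``$g_n/g_{n-1}\to1$'' to close the doubling phase. This does not follow from the hypotheses: for instance $g_n=2^j$ on blocks of length $2^j$ satisfies all assumptions of the theorem (for $\beta\geq1$) and has ratio $2$ infinitely often. What is true, and what you actually need, is that $g_n/g_{n-1}\leq 2^\beta$ eventually, which is precisely the hypothesis that $k_n/g_n$ is eventually non-decreasing; a bounded ratio is enough to renormalize $e_n=cg_n^2d_n$ into $\tilde e_n\leq\tilde e_{n-1}^2$. The same hypothesis, in the form $g_n\leq Ck_n$, is what gives $g\in H^{-s}$ for $s>\beta$ --- your ``$g_n$ grows at most polynomially'' is likewise not among the hypotheses. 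Since this is the only place where the second assumption of the theorem enters your argument, it needs to be used correctly rather than replaced by an unjustified limit.

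Second, the uniqueness argument as you describe it does not close. In the Gronwall estimate for $\sum_n W_n^2$ with $W_n=X_n-Y_n$, the term $\sum_n k_n Y_n W_nW_{n+1}$ carries the coefficient $k_n|Y_n|\leq 2^{2\beta n/3}\|Y\|_{W^{\beta/3,3}}$, which is unbounded in $n$; so the $W^{\beta/3,3}$ norm of the weak solution cannot serve as the integrable-in-time Gronwall coefficient. Two fixes are available. One is the weak--strong mechanism: using the energy equality of \emph{both} solutions, the $Y$-dependent quadratic terms cancel and one is left with $\frac{d}{dt}\|W\|^2\leq c\,\sup_n\bigl(k_n|X_n|\bigr)\|W\|^2$, whose coefficient involves only the regular solution $X\in L^\infty(H^s)$, $s\geq\beta$. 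The other is the paper's route, which is shorter given what you have already proved: any $Y\in L^3_{\textup{loc}}([0,\infty);W^{\beta/3,3})$ satisfies the energy inequality by Proposition~\ref{p:L^3W^3_no_anom_diss}, hence your a priori estimate applies to $Y$ itself, so $Y\in L^\infty_{\textup{loc}}([0,\infty);H^s)$, and uniqueness then follows from Proposition~\ref{p:local_exist_uniq}.
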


\subsection{The bounding sequence}

For all initial condition in $H$, we introduce a sequence of positive
numbers which will be fundamental to bound all weak solutions.

\begin{defn}\label{def:bound_seq}
A sequence $y=(y_n)_{n\geq1}$ is the bounding sequence for $x\in H$ if
it is defined by
\begin{align}
y_1&:=y_2:=2\|x\|_H^2,
\\
y_{n+2}
&:=C_{n+2}(y_{n+1}^{1/2})y_n+\sum_{i\geq n+2}x_i^2,
\qquad n\geq1,\label{e:recursion_y}
\end{align}
where for $n\geq3$, $C_n:\R_+\mapsto(0,1)$ is the following increasing
function,
\[
C_n(v)
:=\biggl(1+\frac1{\frac12g_n\|\phi\|_{\ell^\infty}v}\biggr)^{-1}
,\qquad v>0.
\]
\end{defn}

\begin{lem}\label{l:X^2<=y_n}
Suppose $g$ is non-decreasing. Let $T>0$, $x\in H$ and $y$ be the
bounding sequence for $x$. Suppose $X$ is a weak solution with initial
condition $x$ that satisfies the energy inequality on $[0,t]$ for all
$t<T$.
Then $X_n^2(t)\leq y_n$ for all $t\in[0,T)$ and all $n\geq1$.
\end{lem}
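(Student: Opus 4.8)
The plan is to prove the stronger statement that the tail energies satisfy $F_n(t)\le y_n$ for every $t\in[0,T)$ and every $n\ge1$; since $X_n^2\le F_n$ this yields the lemma at once. I would argue by induction on $n$. For the base cases $n=1,2$ the energy inequality gives $F_n(t)\le F_1(t)=E(t)\le E(0)=\|x\|_H^2\le 2\|x\|_H^2=y_n$, so both hold. For the inductive step I assume $F_j(s)\le y_j$ for all $j\le n+1$ and all $s\in[0,T)$, and aim at $F_{n+2}(t)\le y_{n+2}$.

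The natural object for the step is the tail energy $F_{n+2}$, because in its evolution the nonlinear transfer terms telescope: summing $\frac{d}{dt}X_i^2$ over $i\ge n+2$ leaves only the influx from mode $n+1$. Concretely, from~\eqref{e:Fn_deriv} I have
\[
  F_{n+2}(t)\le F_{n+2}(0)+\int_0^t 2\phi_{n+1}k_{n+1}X_{n+1}^2X_{n+2}\,ds-\int_0^t\sum_{i\ge n+2}\frac{2k_i}{g_i}X_i^2\,ds.
\]
The initial term equals $F_{n+2}(0)=\sum_{i\ge n+2}x_i^2$, which is exactly the additive constant in~\eqref{e:recursion_y}; it therefore remains to bound the net energy gained by the tail, $P(t):=F_{n+2}(t)-F_{n+2}(0)$, by $C_{n+2}(y_{n+1}^{1/2})\,y_n$. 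If $P(t)\le0$ there is nothing to prove, so I may assume $P(t)>0$.

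The heart of the matter is to control the influx against the dissipation. Using the inductive bound $X_{n+1}^2\le F_{n+1}\le y_{n+1}$ I would extract one factor $|X_{n+1}|\le y_{n+1}^{1/2}$ and estimate the remaining product $|X_{n+1}X_{n+2}|$ by Young's inequality tuned so that the $X_{n+2}^2$ part is absorbed by the dissipation coefficient $k_{n+2}/g_{n+2}$ of the single retained term at level $n+2$ (all other dissipation terms are nonnegative and may be discarded, which is exactly why only $g$ non-decreasing, and not monotonicity of $k_n/g_n$, is needed here). Tracking the energy budget that can pass through the modes $\le n+1$ via the inductive bound $F_n\le y_n$, the goal is to close these estimates into the implicit inequality
\[
  P(t)\le \tfrac12\, g_{n+2}\,\|\phi\|_{\ell^\infty}\,y_{n+1}^{1/2}\,\bigl(y_n-P(t)\bigr),
\]
whose distinctive feature is that $P$ appears on both sides. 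Solving it gives precisely $P(t)\le C_{n+2}(y_{n+1}^{1/2})\,y_n$, and adding back $F_{n+2}(0)$ yields $F_{n+2}(t)\le y_{n+2}$, which closes the induction.

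The step I expect to be the main obstacle is the derivation of this implicit inequality with the exact constant. The difficulty is that the flux acts over the whole interval, so a naive bound using only the total dissipated energy $\int_0^t\sum_i 2k_ig_i^{-1}X_i^2\,ds\le E(0)$ produces factors that grow with $n$ rather than the self-limiting quantity $C_{n+2}<1$. The correct closure must exploit that the energy reaching the tail first has to be transferred through the modes $n$ and $n+1$, where part of it is dissipated; quantifying this loss is what turns the linear budget $y_n$ into $C_{n+2}(y_{n+1}^{1/2})\,y_n<y_n$ and makes $P$ reappear on the right-hand side. A careful book-keeping of the energy balance for the block $\{n,n+1\}$, combined with the Young absorption above, is what I would need to make this rigorous.
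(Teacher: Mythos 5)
Your skeleton points in the right direction and shares several features with the paper's proof: induction along the recursion \eqref{e:recursion_y}, extraction of $|X_{n+1}|\le y_{n+1}^{1/2}$ from the flux term, the use of $g$ non-decreasing to turn $k_{n+1}(X_{n+1}^2+X_{n+2}^2)$ into dissipation terms, and above all the self-improving inequality with the tail gain on both sides, which is exactly where the factor $C_{n+2}<1$ comes from. Nevertheless there is a genuine gap, located precisely at the step you yourself flag as ``the main obstacle'': the implicit inequality $P(t)\le\tfrac12 g_{n+2}\|\phi\|_{\ell^\infty}y_{n+1}^{1/2}(y_n-P(t))$ is stated as a goal, never derived, and it cannot be derived from the inductive hypothesis you propose. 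Your induction carries only the tail energies $F_j\le y_j$. After the Young absorption, what remains to be shown is that the accumulated dissipation at the two gateway modes, $D(t):=\int_0^t\bigl(\tfrac{2k_{n+1}}{g_{n+1}}X_{n+1}^2+\tfrac{2k_{n+2}}{g_{n+2}}X_{n+2}^2\bigr)ds$, is at most $y_n-P(t)$. With only pointwise tail-energy bounds available, the natural estimate of $D(t)$ goes through the energy balance of the tail at $n$ or $n+1$, which reintroduces the flux into that tail --- the very quantity being controlled, one index lower --- so the induction never closes; the only alternative, $D(t)\le E(0)$ from \eqref{e:no_anom_diss_condition}, is the fixed, non-decaying budget that you correctly rule out.

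The paper closes exactly this gap by strengthening the quantity carried through the induction. It introduces $d_n^2(t)=F_n(t)+\sum_{i\ge n+1}\int_0^t\frac{2k_i}{g_i}X_i^2\,ds$ (tail energy \emph{plus} accumulated dissipation strictly beyond $n$), sets $\bar d_n=\sup_{t<T}d_n(t)$, and proves the closed recursion $\bar d_{n+2}^2\le C_{n+2}(\bar d_{n+1})\bar d_n^2+F_{n+2}(0)$, from which $\bar d_n^2\le y_n$ follows by induction. The point of this device is the identity $d_n^2(t)-d_{n+2}^2(t)=F_n(t)-F_{n+2}(t)+D(t)\ge D(t)$: combined with the inductive bound $d_n^2(t)\le y_n$ and with $d_{n+2}^2(t)\ge F_{n+2}(t)\ge P(t)$, it yields precisely $D(t)\le y_n-P(t)$, i.e.\ the book-keeping your proposal is missing. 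So your plan is repairable, but only by replacing the induction on $F_n\le y_n$ with the induction on the dissipation-augmented quantities $\bar d_n^2\le y_n$ --- which is the paper's proof.
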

\begin{proof}
Define
\begin{equation}\label{e:formula_dn}
d_n^2(t)=F_n(t)+\sum_{i\geq n+1}\int_0^t\frac2{g_i}k_iX_i^2(s)ds.
\end{equation}
Notice that $X_n^2(t)\leq F_n(t)\leq d_n^2(t)\leq\|x\|_H^2$ by the
definition of $d_n$ and~\eqref{e:no_anom_diss_condition}.
By~\eqref{e:Fn_deriv} we deduce that
\begin{equation}\label{e:def_dn}
d_n^2(t)
\leq\int_0^t2\phi_{n-1}k_{n-1}X_{n-1}^2(s)X_n(s)ds-\int_0^t\frac2{g_n}k_nX_n^2(s)ds+F_n(0).
\end{equation}
Define
\[
\bar d_n:=\sup_{0\leq t<T}d_n(t)<\infty.
\]
We claim that for all $n\geq1$
\begin{equation}\label{e:dn_bound_claim}
\bar d_{n+2}^2
\leq C_{n+2}(\bar d_{n+1})\bar d^2_n+F_{n+2}(0).
\end{equation}
Then, since $y_1:=y_2:=2\|x\|_H^2$ and since $C_n$ is monotone
increasing, an easy induction argument yields $\bar d_n^2\leq y_n$
for all $n\geq1$ and hence that
\[
X_n^2(t)
\leq d_n^2(t)
\leq \bar d_n^2
\leq y_n,
\]
for all $n$ and all $t$.

We turn to the proof of the claim~\eqref{e:dn_bound_claim}.
By the Cauchy-Schwarz inequality applied to~\eqref{e:def_dn}
\begin{multline*}
d_n^2(t)
\leq\int_0^t\|\phi\|_{\ell^\infty}k_{n-1}|X_{n-1}(s)|(X_{n-1}^2(s)+X_n^2(s))ds+F_n(0)\\
\leq\|\phi\|_{\ell^\infty}\bar d_{n-1}\int_0^tk_{n-1}(X_{n-1}^2(s)+X_n^2(s))ds+F_n(0)\\
\leq g_n\|\phi\|_{\ell^\infty}\bar d_{n-1}\int_0^t\left(\frac{k_{n-1}}{g_{n-1}}X_{n-1}^2(s)+\frac{k_n}{g_n}X_n^2(s)\right)ds+F_n(0),
\end{multline*}
where we used the fact that $g_n$ and $k_n$ are non-decreasing with
$n$. We get another bound from~\eqref{e:formula_dn},
\begin{multline*}
d_n^2(t)-d^2_{n-2}(t)
=F_n(t)-F_{n-2}(t)-\int_0^t\frac{2k_{n-1}}{g_{n-1}}X_{n-1}^2(s)ds-\int_0^t\frac{2k_n}{g_n}X_n^2(s)ds\\
\leq-2\int_0^t\left(\frac{k_{n-1}}{g_{n-1}}X_{n-1}^2(s)+\frac{k_n}{g_n}X_n^2(s)\right)ds,
\end{multline*}
hence putting the former into the latter,
\[
d_n^2(t)
\leq d^2_{n-2}(t)-2\int_0^t\left(\frac{k_{n-1}}{g_{n-1}}X_{n-1}^2(s)+\frac{k_n}{g_n}X_n^2(s)\right)ds
\leq d^2_{n-2}(t)-\frac{d_n^2(t)-F_n(0)}{\tfrac12g_n\|\phi\|_{\ell^\infty}\bar d_{n-1}},
\]
yielding
\[
d_n^2(t)
\leq \biggl(1+\frac1{\frac12g_n\|\phi\|_{\ell^\infty}\bar d_{n-1}}\biggr)^{-1}d^2_{n-2}(t)+F_n(0)
=C_n(\bar d_{n-1})d^2_{n-2}(t)+F_n(0).
\]
Taking the sup for $s\in[0,T)$ yields the claimed inequality~\eqref{e:dn_bound_claim}.
\end{proof}

Lemma~\ref{l:X^2<=y_n} states that the variables $X_n(t)$ can be bounded
by the the bounding sequence $y$, so we will spend the rest of the section 
to show exponential decay for the bounding sequence $y_n$. 
As a first step we see that bounding sequences converge to $0$.

\begin{lem}\label{l:yn_goes_zero}
Suppose $g$ is non-decreasing and $\sum_{n\geq1}g_n^{-1}=\infty$. Let
$x\in H^s$ for some $s>0$ and let $y$ be the bounding sequence for
$x$. For all $n\geq1$, let $h_n:=\sum_{j\geq n}\sum_{i\geq
  j}x_i^2$. Then
\begin{equation}\label{e:bound_y_product}
y_{n+2m}
\leq y_n\prod_{i=1}^mC_{n+2i}(y_{n+2i-1}^{1/2})+h_n
,\qquad\text{for all } n\geq1, m\geq0.
\end{equation}
Moreover $y_n\to0$ as $n\to\infty$.
\end{lem}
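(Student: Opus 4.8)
The plan is to prove Lemma~\ref{l:yn_goes_zero} in two stages: first establish the product bound~\eqref{e:bound_y_product} by iterating the recursion~\eqref{e:recursion_y}, and then use the divergence of $\sum g_n^{-1}$ together with the summability of the tail $h_n$ to force $y_n\to0$.

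\textbf{Step 1: The product bound.} I would proceed by induction on $m$, keeping $n$ fixed. The base case $m=0$ reads $y_n\leq y_n+h_n$, which holds since $h_n\geq0$. For the inductive step, I unfold the recursion $y_{n+2(m+1)}=C_{n+2m+2}(y_{n+2m+1}^{1/2})y_{n+2m}+\sum_{i\geq n+2m+2}x_i^2$, substitute the inductive hypothesis for $y_{n+2m}$, and use that each $C_j\in(0,1)$ so that multiplying the tail term $h_n$ by $C_{n+2m+2}$ only decreases it. The one point requiring care is bookkeeping the inner-sum tails: the leftover pieces $\sum_{i\geq n+2j}x_i^2$ accumulated along the way, together with the new contribution, must sum to at most $h_n=\sum_{j\geq n}\sum_{i\geq j}x_i^2$. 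Since $C_j<1$, every tail contribution picked up at step $j$ is bounded by $\sum_{i\geq n+2j}x_i^2$ and these are dominated termwise by the double sum defining $h_n$, so the aggregate stays below $h_n$. This gives~\eqref{e:bound_y_product}.

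\textbf{Step 2: Convergence to zero.} Here is where the hypothesis $\sum g_n^{-1}=\infty$ enters. First I note $h_n\to0$ as $n\to\infty$: since $x\in H^s$ the tail $\sum_{i\geq j}x_i^2$ is summable in $j$ (it decays like the tail of an $\ell^1$ sequence because $x\in H^s\subset\ell^2$ with rapidly decaying tails), so $h_n$ is the tail of a convergent series. It therefore suffices to show that the product $P_{n,m}:=\prod_{i=1}^m C_{n+2i}(y_{n+2i-1}^{1/2})$ can be made small. Writing $L:=\liminf_n y_n\geq0$, I would argue by contradiction: if $L>0$, then for large indices $y_{n+2i-1}^{1/2}$ is bounded below by some $\delta>0$, so $C_{n+2i}(\delta)=\bigl(1+\tfrac{2}{g_{n+2i}\|\phi\|_{\ell^\infty}\delta}\bigr)^{-1}$. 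Taking logarithms, $-\log C_j\approx \tfrac{2}{g_j\|\phi\|_{\ell^\infty}\delta}$ for large $g_j$, and since $\sum g_j^{-1}=\infty$ the series $\sum_j(-\log C_j)$ diverges, forcing $\prod C_j\to0$. Then~\eqref{e:bound_y_product} gives $y_{n+2m}\leq y_n P_{n,m}+h_n\to h_n$, and letting $n\to\infty$ pushes $y$ below $L$, contradicting $L>0$.

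\textbf{Main obstacle.} The delicate part is the contradiction argument, because $C_{n+2i}$ depends on $y_{n+2i-1}^{1/2}$, whose lower bound is exactly what I am trying to control — there is a circularity between the decay of $y$ and the size of the factors $C_j$. The clean way around this is to fix the \emph{separate} subsequences of even and odd indices and exploit monotonicity of $C_n$ in its argument: using $y_{n+2i-1}\geq L-\varepsilon$ along a subsequence where the liminf is nearly attained, I get a genuine uniform lower bound $\delta$ on the arguments, decoupling the estimate. The other technical point is ensuring the logarithm estimate $-\log(1+x)^{-1}=\log(1+x)\geq \tfrac{x}{1+x}$ is applied with the right comparison so that divergence of $\sum g_j^{-1}$ transfers to divergence of $\sum(-\log C_j)$; this is elementary but must be stated so the product genuinely vanishes rather than merely staying bounded away from $1$.
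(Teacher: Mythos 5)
Your Step 1 is essentially the paper's argument and is fine: the product bound follows by induction once you track the accumulated tails $\sum_{i\geq n+2j}x_i^2$, $j=1,\dots,m$, as a partial sum dominated termwise by $h_n$, exactly as you describe. The genuine gap is in Step 2. The function $C_n(v)=\bigl(1+\tfrac{2}{g_n\|\phi\|_{\ell^\infty}v}\bigr)^{-1}$ is \emph{increasing} in $v$ (as stated in Definition~\ref{def:bound_seq}). Hence your contradiction hypothesis $\liminf_n y_n=L>0$, which gives the lower bound $y_{n+2i-1}^{1/2}\geq\delta$, yields $C_{n+2i}(y_{n+2i-1}^{1/2})\geq C_{n+2i}(\delta)$, i.e.\ a \emph{lower} bound on the factors of the product. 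Proving $\prod_j C_j(\delta)=0$ (which is true) then tells you nothing about the actual product $\prod_j C_j(y_{j-1}^{1/2})$: a product can equal $1$ identically while a pointwise lower bound on its factors has vanishing product. So the step ``forcing $\prod C_j\to0$'' does not follow, and the circularity you flag in your ``main obstacle'' paragraph is not resolved by your decoupling, because that decoupling again feeds a lower bound on $y$ into the monotonicity of $C_n$, which is the wrong direction.

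The fix --- and it is what the paper does, with no contradiction argument at all --- is to use an \emph{upper} bound on $y$, which you already have for free from Step 1: taking $C_j\leq1$ in \eqref{e:bound_y_product} gives $y_{n+2m}\leq y_n+h_n$, so $y$ is bounded, say $y_j\leq M$ for all $j$. Since $C_j$ is increasing, $C_j(y_{j-1}^{1/2})\leq C_j(M^{1/2})=(1+cg_j^{-1})^{-1}$ with $c=2/(\|\phi\|_{\ell^\infty}M^{1/2})$, and this upper bound has vanishing product because $\prod_j(1+cg_j^{-1})\geq1+c\sum_j g_j^{-1}=\infty$. A second point your write-up glosses over: the product in \eqref{e:bound_y_product} runs only over indices $n+2i$ of one parity, so the divergence of $\sum_j g_j^{-1}$ over all $j$ must be transferred to each parity class; this is exactly where the hypothesis that $g$ is non-decreasing enters ($g_{n+2i}^{-1}\geq g_{n+2i+1}^{-1}$, so the subsequence sum is at least half of the tail sum). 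With these two corrections, applying \eqref{e:bound_y_product} with starting indices $n$ and $n+1$ and letting $m\to\infty$ gives $\limsup_j y_j\leq h_n+h_{n+1}$, and then $n\to\infty$ concludes, since $h_n\to0$ (which needs $\sum_i i\,x_i^2<\infty$, true for $x\in H^s$ with $s>0$ but not for general $x\in\ell^2$, so your parenthetical justification should be sharpened as well).
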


\begin{proof}
Since $C_j\leq1$ for all $j$, inequality~\eqref{e:bound_y_product} is
easily proved by induction on $m$ using~\eqref{e:recursion_y}.

From this we deduce that $y$ is bounded. Since $v\mapsto C_j(v)$
is monotone increasing, we may replace the bound for $y$ inside
$C_j$ yielding that
\[
C_j(y_{j-1}^{1/2})\leq(1+cg_j^{-1})^{-1}
,\qquad j\geq1,
\]
for some constant $c>0$. Since $\sum_{j\geq1}g_j^{-1}=\infty$, then
$\prod_{j\geq1}(1+cg_j^{-1})^{-1}=0$. Since $g$ is monotone, then
$\prod_{i\geq1}(1+cg_{n+2i}^{-1})^{-1}=0$ too, hence
by considering~\eqref{e:bound_y_product} for $n$ and $n+1$, we get,
\[
\limsup_{j\geq n}y_j\leq h_n+h_{n+1}.
\]
Since $x\in H^s$, $\lim_{n\to\infty}h_n=0$, therefore $y_n\to0$.
\end{proof}

The next step is to introduce in Definition~\ref{def:special_subs} below
a special sub--sequence of the indices of $g_n$, this step is necessary
because the hypothesis $\sum_n g_n=\infty$ does not provide enough
information on the rate of divergence of the series.

\begin{defn}\label{def:special_subs}
Given a sequence $g$ with $\sum_{n\geq1}g_n^{-1}=\infty$, a positive 
integer $n_0$ and real numbers $\theta>0,s>0$, define by induction
on $k\geq0$,
\begin{equation}\label{e:def_nk}
n_{k+1}:=\inf\Bigl\{n\geq n_k+2:\sum_{j=n_k+2}^ng_j^{-1}\geq2^{-sk}\theta\Bigr\}<\infty.
\end{equation}
\end{defn}

Notice that the definition above gives a finite number, because
$\sum_{j\geq1}g_j^{-1}=\infty$. The importance of this definition
will be clear with the next two lemmas.

\begin{lem}\label{l:bound_yn_nk}
Suppose $g$ is non-decreasing and $\sum_{n\geq1}g_n^{-1}=\infty$. Let
$x\in H^s$ for some $s>0$ and let $y$ be the bounding sequence for
$x$. Then there exist $n_0\geq1$ and $\theta>0$ such that 
the sequence $(n_k)_{k\geq0}$ given in Definition~\ref{def:special_subs} 
satisfies the following inequality:
\begin{equation}\label{e:sup_yjn}
\sup_{j\geq n_k}y_j
\leq 2^{-2sk}
,\qquad k\geq0.
\end{equation}
\end{lem}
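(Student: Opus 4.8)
The plan is to leverage the two previous lemmas—Lemma~\ref{l:yn_goes_zero}, which gives the product bound~\eqref{e:bound_y_product} together with $y_n\to0$, and the construction of the special subsequence in Definition~\ref{def:special_subs}—and to choose the parameters $n_0,\theta,s$ so that inequality~\eqref{e:sup_yjn} can be established by induction on $k$. The base case $k=0$ should reduce to the fact that $y_n\to0$ (Lemma~\ref{l:yn_goes_zero}): since the $y_j$ tend to $0$, we may pick $n_0$ large enough that $\sup_{j\geq n_0}y_j\leq1=2^{-2s\cdot0}$, which secures~\eqref{e:sup_yjn} at $k=0$ and simultaneously makes the tails $h_{n_0}$ small. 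The constant $\theta$ will be chosen in the inductive step to make the product of the $C$-factors small enough to beat the target $2^{-2s}$ contraction at each stage.

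\medskip

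For the inductive step I would assume $\sup_{j\geq n_k}y_j\leq2^{-2sk}$ and try to deduce $\sup_{j\geq n_{k+1}}y_j\leq2^{-2s(k+1)}$. The key is to apply~\eqref{e:bound_y_product} with the base index $n_k$ (and $n_k+1$, to cover both parities of the target index), running the product up to roughly $n_{k+1}$. Using the monotonicity of $C_j$ and the inductive hypothesis to bound $y_{n+2i-1}^{1/2}\leq2^{-sk}$ inside each factor, each $C_{n+2i}$ is dominated by $\bigl(1+\tfrac12 g_{n+2i}^{-1}\|\phi\|_{\ell^\infty}^{-1}2^{-sk}\cdot(\text{const})^{-1}\bigr)^{-1}$ or a similar expression; taking logarithms turns the product into a sum of the form $-c\,2^{-sk}\sum g_j^{-1}$. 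Here the definition of $n_{k+1}$ enters decisively: by~\eqref{e:def_nk} the partial sum $\sum_{j=n_k+2}^{n_{k+1}}g_j^{-1}$ is at least $2^{-sk}\theta$, so the exponent in the product is at most $-c\,2^{-sk}\cdot2^{-sk}\theta$—wait, I must be careful with the scaling, and the correct reading is that the $2^{-sk}$ weight inside $C_j$ combines with the threshold $2^{-sk}\theta$ in a way that, after choosing $\theta$ appropriately large relative to the fixed constants and the desired factor $2^{-2s}$, yields $\prod_i C_{n_k+2i}(\cdots)\leq 2^{-2s}\cdot2^{-2sk}/y_{n_k}$ up to the additive tail. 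The additive tail $h_{n_k}$ must then be absorbed.

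\medskip

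The main obstacle I anticipate is reconciling the arithmetic of the two competing scales: the factor $2^{-sk}$ coming from the inductive bound on $y$ inside each $C_j$, and the threshold $2^{-sk}\theta$ in~\eqref{e:def_nk} that controls how many indices the subsequence skips. These must multiply to give a fixed, $k$-independent amount of contraction so that the bound improves by exactly $2^{-2s}$ at each step; getting the constants to line up—so that the product piece is at most $2^{-2s(k+1)}$ and the additive tail $h_{n_k}$ does not spoil this—is the delicate point. Concretely, since $x\in H^s$ the tails $h_n$ decay, and I would want to absorb $h_{n_k}$ into the target by choosing $n_0$ large enough that $h_{n_0}$ (hence all later $h_{n_k}$) is negligible compared with the geometric gain $2^{-2s(k+1)}$; one typically splits the target, e.g.\ demanding the product piece to be at most $\tfrac12 2^{-2s(k+1)}$ and $h_{n_k}\leq\tfrac12 2^{-2s(k+1)}$ separately. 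Verifying that a single choice of $\theta$ (not depending on $k$) delivers the required product bound uniformly, while the tails are handled by the initial choice of $n_0$, is where the real work lies; the rest is the bookkeeping of plugging~\eqref{e:bound_y_product} into the induction.
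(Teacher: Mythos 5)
Your overall skeleton coincides with the paper's (induction on $k$, base case from Lemma~\ref{l:yn_goes_zero}, the product bound~\eqref{e:bound_y_product} combined with the threshold in~\eqref{e:def_nk}, tails absorbed by taking $n_0$ large), but the one computation that makes this scheme work is exactly the one you leave unresolved, and as written it points the wrong way. Since $C_j$ is increasing in its argument, the inductive hypothesis $y_{j-1}^{1/2}\leq 2^{-sk}$ gives
\[
C_j\bigl(y_{j-1}^{1/2}\bigr)\leq\bigl(1+c\,2^{+sk}g_j^{-1}\bigr)^{-1},
\qquad c=2/\|\phi\|_{\ell^\infty},
\]
with a factor $2^{+sk}$, not the $2^{-sk}$ you wrote: a smaller $y$ makes each factor a \emph{stronger} contraction. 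Then, by monotonicity of $g$, $\prod_{i=1}^m(1+c2^{sk}g_{n+2i}^{-1})\geq 1+\tfrac12 c\,2^{sk}\sum_{j=n+2}^{n+2m}g_j^{-1}$, and the threshold $\sum_{j=n_k+2}^{n_{k+1}}g_j^{-1}\geq 2^{-sk}\theta$ from~\eqref{e:def_nk} makes the $2^{+sk}$ and the $2^{-sk}$ cancel exactly, yielding the $k$-\emph{independent} contraction $(1+\tfrac12 c\theta)^{-1}$; choosing $\theta$ and $\eta$ so that $2^{2s}(1+\tfrac12 c\theta)^{-1}+\eta\leq 2^{-2s}$ then closes the induction. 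With your scaling ($2^{-sk}$ inside each factor) the per-step contraction would be $(1+\tfrac12 c\theta 2^{-2sk})^{-1}\to 1$ as $k\to\infty$, and the induction would fail. You noticed the inconsistency (``I must be careful with the scaling'') but then asserted the desired bound instead of carrying out the cancellation; this cancellation is the heart of the lemma --- it is precisely why Definition~\ref{def:special_subs} uses the shrinking threshold $2^{-sk}\theta$ --- and cannot be left as bookkeeping.

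A secondary but genuine flaw is your choice of base indices $n_k$ and $n_k+1$ ``to cover both parities.'' For the base index $n_k+1$ the product in~\eqref{e:bound_y_product} involves $C_j$ with $j=n_k+3,n_k+5,\dots$, so the associated sum misses $g_{n_k+2}^{-1}$, which can carry essentially all of the threshold mass (e.g.\ when $n_{k+1}=n_k+2$, a case that by Lemma~\ref{l:nk+1=2+nk} occurs infinitely often); then nothing guarantees the required contraction for indices $j\geq n_{k+1}$ of that parity. The paper instead takes $n\in\{n_k-1,\,n_k\}$, so that both sums start at or before $n_k+2$ and hence contain the full window $\{n_k+2,\dots,n_{k+1}\}$, paying for this by invoking the inductive hypothesis one step earlier ($y_n\leq 2^{-2s(k-1)}$, valid since $n_k-1\geq n_{k-1}$), the extra factor $2^{2s}$ being absorbed into the choice of $\theta$. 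Finally, a fixed small $h_{n_0}$ cannot be ``negligible compared with $2^{-2s(k+1)}$'' for every $k$; what is needed (and what the paper uses) is the geometric decay $h_n\leq\eta 2^{-2sn}$, available since $x\in H^s$, together with $n_k\geq k$, giving $h_{n_k}\leq\eta 2^{-2sk}$.
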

\begin{proof}
In view of applying Lemma~\ref{l:yn_goes_zero}, we need to bound $y_n$
and $h_n$ for $n$ large.  Since $x\in H^s$, then for any $\epsilon>0$,
$x_n\leq\epsilon2^{-sn}$ eventually and in particular for $n$ large,
\[
h_n
=\sum_{j\geq n}\sum_{i\geq j}x_i^2
=\sum_{j\geq 1}jx_{j+n-1}^2
\leq2^{-2sn}\epsilon\sum_{j\geq 1}j2^{-2s(j-1)}.
\]
so for any $\eta>0$, $h_n\leq\eta2^{-2sn}$ eventually. We also know
from Lemma~\ref{l:yn_goes_zero} that $y_n\to0$ as $n\to\infty$. Thus
fix some $\eta>0$ and let $n_0$ be large enough that for all $n\geq
n_0$,
\begin{equation}\label{e:yn_and_hn_for_large_n}
y_n\leq1
\qquad\text{and}\qquad
h_n\leq\eta2^{-2sn}.
\end{equation}

We now proceed to prove~\eqref{e:sup_yjn} by induction on $k\geq0$.
The initial step is simply given by the definition of $n_0$.

We turn to the induction step. Suppose $\sup_{j\geq n_k}y_j\leq
2^{-2sk}$. Then for $j\geq n_k+1$,
\[
C_j(y_{j-1}^{1/2})^{-1}
:=1+\frac1{\frac12g_j\|\phi\|_{\ell^\infty}y_{j-1}^{1/2}}
\geq1+\frac1{\frac12g_j\|\phi\|_{\ell^\infty}2^{-sk}}
=1+c2^{sk}g_j^{-1},
\]
where $c=2/\|\phi\|_{\ell^\infty}$. By~\eqref{e:bound_y_product} we have
then, for $n\geq n_k-1$,
\[
y_{n+2m}
\leq y_n\prod_{i=1}^m(1+c2^{sk}g_{n+2i}^{-1})^{-1}+h_n,
\]
By the monotonicity of $g$,
\[
\prod_{i=1}^m(1+c2^{sk}g_{n+2i}^{-1})
\geq1+c2^{sk}\sum_{i=1}^mg_{n+2i}^{-1}
\geq1+c2^{sk}\frac12\sum_{j=n+2}^{n+2m}g_j^{-1}.
\]
By the definition of $n_{k+1}$ in~\eqref{e:def_nk}, if $n\leq n_k$
and $n+2m\geq n_{k+1}$ we have
\[
\sum_{j=n+2}^{n+2m}g_j^{-1}
\geq2^{-sk}\theta.
\]
Collecting all conditions, we have proved that if $n\in\{n_k-1,n_k\}$
and $n+2m\geq n_{k+1}$, then
\[
y_{n+2m}
\leq y_n(1+\frac12c\theta)^{-1}+h_n.
\]
Since $n\geq n_k-1\geq n_{k-1}$, then by inductive hypothesis $y_n\leq
2^{-2s(k-1)}$; moreover since $n\geq n_k-1\geq k$, then by the second one
of~\eqref{e:yn_and_hn_for_large_n}, $h_n\leq\eta2^{-2sk}$, so the
bound above becomes
\[
y_{n+2m}
\leq2^{-2s(k-1)}(1+\frac12c\theta)^{-1}+\eta2^{-2sk}.
\]
Now we choose $\theta$ large enough and $\eta$ small enough that
\[
2^{2s}(1+\frac12c\theta)^{-1}+\eta
\leq2^{-2s},
\]
to get
\[
y_{n+2m}
\leq 2^{-2s(k+1)}
,\qquad n\in\{n_k-1,n_k\},\quad n+2m\geq n_{k+1}.
\]
Since for all $j\geq n_{k+1}$ there
exist $n$ and $m$ such that $n_k-1\leq n\leq n_k$ and $j=n+2m\geq n_{k+1}$,
we have proved
\[
\sup_{j\geq n_{k+1}}y_j
\leq2^{-2s(k+1)},
\]
closing the induction.
\end{proof}

\begin{lem}\label{l:nk+1=2+nk}
Suppose $g$ is non-decreasing and $\sum_{n\geq1}g_n^{-1}=\infty$.  Let
$n_0\geq1$ and $\theta>0$ be constant. If $(n_k)_{k\geq0}$ is as in
Definition~\ref{def:special_subs} then there exist infinitely many
$k\geq1$ such that $n_{k+1}=n_{k}+2$.
\end{lem}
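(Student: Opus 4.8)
The plan is to argue by contradiction. Suppose that $n_{k+1}=n_k+2$ for only finitely many $k$; then there is a $K$ such that $n_{k+1}\geq n_k+3$ for every $k\geq K$. Under this assumption I will show that the total dissipation mass $\sum_j g_j^{-1}$ accumulated from index $n_K+2$ onward is finite, which contradicts the hypothesis $\sum_n g_n^{-1}=\infty$. The whole argument rests on two features: the thresholds $2^{-sk}\theta$ appearing in~\eqref{e:def_nk} decay geometrically, and $g$ is non-decreasing so that $g_j^{-1}$ is non-increasing.

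First I would extract the two consequences of the infimum in~\eqref{e:def_nk}. Writing $S_k:=\sum_{j=n_k+2}^{n_{k+1}}g_j^{-1}$ for the mass of the $k$-th block, minimality of $n_{k+1}$ gives $S_k\geq 2^{-sk}\theta$, while for a \emph{long} block (one with $n_{k+1}\geq n_k+3$) the partial sum up to $n_{k+1}-1$ still lies below threshold, so $\sum_{j=n_k+2}^{n_{k+1}-1}g_j^{-1}<2^{-sk}\theta$. Since $g_j^{-1}$ is non-increasing, the first term of this partial sum already satisfies $g_{n_k+2}^{-1}<2^{-sk}\theta$, and the last term of the block satisfies $g_{n_{k+1}}^{-1}\leq g_{n_k+2}^{-1}<2^{-sk}\theta$. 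Combining the two, $S_k<2\cdot 2^{-sk}\theta$ for every $k\geq K$, and hence $\sum_{k\geq K}S_k<\infty$ by geometric summation.

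Next I would account for the indices omitted by the blocks. The blocks $[n_k+2,n_{k+1}]$, $k\geq K$, are pairwise disjoint and cover every integer $\geq n_K+2$ except the single gap point $n_{k+1}+1$ left between block $k$ and block $k+1$; that is, the omitted indices are exactly $n_m+1$ for $m\geq K+1$. Therefore
\[
  \sum_{j\geq n_K+2}g_j^{-1}
    = \sum_{k\geq K}S_k + \sum_{m\geq K+1}g_{n_m+1}^{-1}.
\]
To control the second sum I would again invoke monotonicity: since $n_m+1\geq n_{m-1}+2$ one has $g_{n_m+1}^{-1}\leq g_{n_{m-1}+2}^{-1}$, and for $m\geq K+1$ the block $m-1$ is long, so the bound from the previous step gives $g_{n_{m-1}+2}^{-1}<2^{-s(m-1)}\theta$. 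Hence $\sum_{m\geq K+1}g_{n_m+1}^{-1}<\infty$ as well, so the right-hand side above is finite while the left-hand side is infinite — the desired contradiction.

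I expect the only delicate point to be the bookkeeping of which indices are skipped between consecutive blocks, namely identifying the gap as precisely the single index $n_m+1$ for each $m$; once that is correct, both the block masses and the omitted terms are summable for the same reason. The main conceptual step is the observation that monotonicity of $g$ forces every term inside a long block to sit below its threshold, which is what simultaneously controls the $S_k$ and the gap terms.
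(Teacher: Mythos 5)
Your proof is correct and follows essentially the same route as the paper: decompose the tail $\sum_{j\geq n_K+2}g_j^{-1}$ into within-block sums (each below its threshold $2^{-sk}\theta$ by minimality of $n_{k+1}$ for long blocks) plus the leftover boundary indices, bound those leftover terms via monotonicity of $g$ by $g_{n_{m-1}+2}^{-1}<2^{-s(m-1)}\theta$, and contradict $\sum_n g_n^{-1}=\infty$ by geometric summability. The only difference is bookkeeping — you attach $n_{k+1}$ to the block and treat $n_{k+1}+1$ as the single gap, while the paper treats the pair $\{n_{k+1},n_{k+1}+1\}$ as boundary terms — which does not change the substance of the argument.
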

\begin{proof}
Suppose that there exists a non--negative integer $r$ such that
$n_{k+1}\geq n_k+3$ for all
 $k\geq r$.  By the definition of the
sequence $(n_k)_{k\geq0}$, we know that for $k\geq r$,
\[
\sum_{j=n_k+2}^{n_{k+1}-1}g_j^{-1}<2^{-sk}\theta.
\]
Summing on $k$ we obtain
\[
\sum_{k\geq r}\sum_{j=n_k+2}^{n_{k+1}-1}g_j^{-1}<\infty,
\]
hence since $\sum_{j\geq n_r}g_j^{-1}=\infty$,
\begin{equation}\label{e:divergent}
\sum_{k\geq r}(g_{n_k}^{-1}+g_{n_k+1}^{-1})=\infty.
\end{equation}
But $g_{n_k+1}^{-1}
\leq g_{n_k}^{-1}
\leq g_{n_{k-1}+2}^{-1}
\leq 2^{-s(k-1)}\theta$,
which is in contradiction with~\eqref{e:divergent}. Hence there exist
infinitely many $k$ such that $n_{k+1}=n_k+2$.
\end{proof}

\begin{lem}\label{l:bound_y_n}
Let $x\in H^s$ for some $s>0$ and let $y$ be the bounding sequence for
$x$. Suppose that $g_n$ is non-decreasing, $g_n2^{-sn}$ is eventually
non-increasing and that $\sum_{n\geq1}g_n^{-1}=\infty$.
Then 
\[
\sum_{n\geq1} 2^{2sn}y_n< \infty.
\]
\end{lem}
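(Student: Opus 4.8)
The plan is to linearise the recursion for the bounding sequence against the weight $2^{2sn}$. Setting $z_n:=2^{2sn}y_n$ and multiplying~\eqref{e:recursion_y} by $2^{2s(n+2)}$, one obtains the inhomogeneous linear recursion
\[
z_{n+2}=a_{n+2}z_n+b_{n+2},\qquad a_{n+2}:=2^{4s}C_{n+2}(y_{n+1}^{1/2}),\quad b_{n+2}:=2^{2s(n+2)}\sum_{i\ge n+2}x_i^2.
\]
The forcing is summable: interchanging the order of summation,
\[
\sum_{n}b_{n}=\sum_{m}2^{2sm}\sum_{i\ge m}x_i^2=\sum_i x_i^2\sum_{m\le i}2^{2sm}\le\tfrac{2^{2s}}{2^{2s}-1}\|x\|_{H^s}^2<\infty,
\]
so the entire difficulty is concentrated in the multiplicative gain factors $a_n$.

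Second, I would reduce the target $\sum_n z_n<\infty$ to a bound on the gain factors that is uniform in the starting index. Iterating the recursion along a fixed parity class gives $z_{N}=z_{N_0}\prod_{(N_0,N]}a+\sum_{l\in(N_0,N]}b_l\prod_{(l,N]}a$, and after summing over $N$ and applying Fubini one gets $\sum_N z_N\le(\sup_l T_l)\,\big(z_{N_0}+z_{N_0+1}+\sum_n b_n\big)$, where $T_l:=\sum_{m\ge0}\prod_{i=1}^{m}a_{l+2i}$ is the tail-sum of products of consecutive gain factors. Thus it suffices to prove $\sup_l T_l<\infty$, i.e.\ that the products of the $a_n$ are summable and do not accumulate, uniformly in where one starts.

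Third, I would estimate the gain factors through the machinery already built. By Lemma~\ref{l:bound_yn_nk}, on the $k$-th block one has $y_{n+1}^{1/2}\le2^{-sk}$, whence $a_{n+2}\le2^{4s}(1+c\,2^{sk}g_{n+2}^{-1})^{-1}$ with $c=2/\|\phi\|_{\ell^\infty}$; and by the defining threshold~\eqref{e:def_nk}, $\prod_{j}(1+c\,2^{sk}g_j^{-1})\ge 1+\tfrac12 c\theta$ over a full block, so the product of the gain factors across that block is at most $2^{2sL_k}(1+\tfrac12 c\theta)^{-1}$, where $L_k:=n_{k+1}-n_k$. Choosing $\theta$ large makes the per-block constant $(1+\tfrac12 c\theta)^{-1}$ as small as we wish, so each length-two block contributes a genuine contraction below $1$; by Lemma~\ref{l:nk+1=2+nk} such blocks recur infinitely often, while $g_n2^{-sn}$ non-increasing controls how fast $g$, and hence the weight amplification $2^{2sL_k}$, can grow on the intervening blocks. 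The goal is to combine these three inputs into the resolvent bound $\sup_l T_l<\infty$ and then conclude as in the second step.

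I expect the genuine obstacle to be exactly this product control on the long blocks. The crude bound $y_j\le2^{-2sk}$ of Lemma~\ref{l:bound_yn_nk} only forces $y$ to drop by a bounded factor across a block, whereas the weight grows by $2^{2sL_k}$, so on a long block $z$ can increase and the block-local estimate $2^{2sL_k}(1+\tfrac12c\theta)^{-1}$ is far too lossy; neither $n_k=O(k)$ nor an eventual bound $a_n<1$ holds in general. The resolution must therefore be global, exploiting that a long block is produced only when $g$ is comparatively large on that range (so that the threshold is reached slowly), which by monotonicity and $\sum g_n^{-1}=\infty$ forces the subsequent blocks to be short and dissipative, in the spirit of~\eqref{e:divergent}; making this compensation quantitative, so that the accumulated products in $T_l$ stay bounded, is the crux of the argument.
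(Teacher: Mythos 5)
Your setup is sound and coincides with the paper's own first step: passing to $z_n=2^{2sn}y_n$, observing that the forcing $b_n$ is summable because $x\in H^s$, and reducing the lemma to control of the gain factors $a_n=2^{4s}C_n(y_{n-1}^{1/2})$. But your argument stops exactly where the lemma's real content begins: you concede that the block-by-block estimate $2^{2sL_k}(1+\tfrac12c\theta)^{-1}$ is too lossy on long blocks, and you leave the decisive bound ($\sup_l T_l<\infty$, or any eventual smallness of the $a_n$) as an unproven goal, sketching only a qualitative ``compensation between long and short blocks'' in the spirit of~\eqref{e:divergent}. That is a genuine gap, and moreover the suggested route is not how the difficulty is resolved: no compensation between blocks is needed at all.

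The mechanism you are missing is a bootstrap that abandons the block structure after a \emph{single} short block. Fix $\theta$ large. By Lemma~\ref{l:nk+1=2+nk} there is some $k$ with $n_{k+1}=n_k+2$, which by~\eqref{e:def_nk} means $g_{n_k+2}\leq 2^{sk}/\theta$ at that one index. The hypothesis that $g_n2^{-sn}$ is eventually non-increasing --- which you mention only in passing --- then propagates this single pointwise bound forward forever: $g_{n_k+m}\leq\theta^{-1}2^{s(k+m-2)}$ for all $m\geq2$. One now proves by induction on $m$ that $y_{n_k+m}\leq 2^{2s}2^{-2s(k+m)}$: the allowed growth of $g$ ($2^{s}$ per step) is exactly offset by the decay of $y^{1/2}$ ($2^{-s}$ per step), so the product $g_{n}\,y_{n-1}^{1/2}$ stays at most $\theta^{-1}$, hence $c_n\leq\bigl(1+2\theta/\|\phi\|_{\ell^\infty}\bigr)^{-1}$ for \emph{every} $n\geq n_k+2$ (the forcing term is absorbed using $n_k\geq k-1$ and $f_n\leq\tfrac122^{-2sn}$). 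There are no ``intervening long blocks'' to worry about, because past $n_k$ the subsequence $(n_j)$ is never used again. Since $\theta$ was arbitrary and each $\theta$ produces such an $n_k$, it follows that $c_n\to0$, i.e.\ $a_n\to0$; in particular the eventual bound $a_n<1$, which you assert ``does not hold in general,'' is precisely what the hypotheses deliver. With $a_n\leq\tfrac12$ eventually, summability of $\sum_n z_n$ is immediate from your own reduction (or simply from iterating $z_{n+2}\leq\tfrac12z_n+b_{n+2}$).
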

\begin{proof}
Let us recall the recursion~\eqref{e:recursion_y} that defines the
bounding sequence $y$,
\begin{equation}\label{e:y_n_recurs_bound}
y_{n+2}:=c_{n+2}y_n+f_{n+2}
,\qquad n\geq1,
\end{equation}
where
\[
c_n
:=C_n(y_{n-1}^{1/2})
:=\left(1+\frac1{\frac12g_n\|\phi\|_{\ell^\infty}y_{n-1}^{1/2}}\right)^{-1}
,\qquad n\geq3,
\]
and where $f_n:=F_n(0)$, $n\geq 3$.
Since $x\in H^s$, it is immediate that
\begin{equation}\label{e:f_n_H_s}
\sum_{n\geq1}2^{2sn}f_n<\infty,
\end{equation}
so our strategy will be to show that
$c_n\to0$ as $n\to\infty$.

By Lemma~\ref{l:bound_yn_nk} there exist $n_0$ and $\theta$ such that
the sequence $(n_i)_{i\geq1}$ of Definition~\ref{def:special_subs}
satisfies
\begin{equation}\label{e:bound_y_j}
\sup_{j\geq n_i} y_j\leq 2^{-2si}
,\qquad i\geq0.
\end{equation}
\emph{A fortiori} these inequalities hold also if we take larger values for
$n_0$ and $\theta$, so let $\theta$ be large enough to verify
inequality~\eqref{e:bound theta} below and let $n_0$ be large enough
that:
\begin{enumerate}
\item $f_n\leq\frac12 2^{-2sn}$ for $n\geq n_0$ (a consequence of~\eqref{e:f_n_H_s});
\item $n\mapsto g_n2^{-sn}$ is non-increasing for $n\geq n_0$.
\end{enumerate}

By Lemma~\ref{l:nk+1=2+nk} there exists $k$ such that
$n_{k+1}=2+n_k$, that is, $g^{-1}_{n_k+2}\geq2^{-sk}\theta$ hence
\begin{equation}\label{e:g_n_k+m}
g_{n_k+m}\leq\frac1{2^{2s}\theta}2^{s(k+m)}
,\qquad m\geq2.
\end{equation}

We have all the ingredients to prove the following inequality:
\begin{equation}\label{e:claim_ynkm}
 y_{n_k+m}\leq2^{2s}2^{-2s(k+m)}
,\qquad m\geq0.
\end{equation}
Let us proceed by induction on $m$. The initial steps for $m=0$ and $m=1$
follow immediately from~\eqref{e:bound_y_j} with $i=k$.

For the inductive step, suppose the inequality~\eqref{e:claim_ynkm}
is true up to $m+1$. By~\eqref{e:g_n_k+m} and the inductive hypothesis,
\[
c_{n_k+m+2}
=\left(1+\frac1{\frac12g_{n_k+m+2}\|\phi\|_{\ell^\infty}y_{n+k+m+1}^{1/2}}\right)^{-1}
\leq\left(1+\frac{2\theta}{\|\phi\|_{\ell^\infty}}\right)^{-1}
\leq \frac12 2^{-4s},
\]
if we choose $\theta$ large enough that
\begin{equation}\label{e:bound theta}
\left(1+\frac{2\theta}{\|\phi\|_{\ell^\infty}}\right)^{-1}
\leq \frac12 2^{-4s}.
\end{equation}
Moreover, since $n_k\geq k-1$, we have
\[
f_{n_k+m+2}
\leq\frac122^{-2s(n_k+m+2)}
\leq\frac122^{2s}2^{-2s(k+m+2)},
\]
hence
\[
y_{n_k+m+2}
=c_{n_k+m+2}y_{n_k+m}+f_{n_k+m+2}
\leq2^{2s}2^{-2s(k+m+2)},
\]
thus closing the induction.

Inequality~\eqref{e:claim_ynkm} says us that $y_n\to0$ at least as
fast as $2^{-2sn}$. To get $\sum_n 2^{2sn}y_n<\infty$ we need a little
bit more. We proved above that for any $\theta$ large enough there
exists $n_k$ such that
\[
\sup_{j\geq n_k}c_j
\leq\left(1+\frac{2\theta}{\|\phi\|_{\ell^\infty}}\right)^{-1}.
\]
By the arbitrarity of $\theta$, $\lim_{n\to\infty}c_n=0$. This
together with~\eqref{e:y_n_recurs_bound} and~\eqref{e:f_n_H_s} proves
that
\[
\sum_{n\geq1}2^{2sn}y_n<\infty.\qedhere
\]
\end{proof}

\subsection{Global existence, uniqueness and regularity}

\begin{thm}\label{thm:main_gen}
Let $x\in H^s$ for some $s>\frac\beta3$. Suppose that $g_n\in H^{-s}$
is non-decreasing, $g_n2^{-sn}$ is eventually non-increasing and that
$\sum_{n\geq1}g_n^{-1}=\infty$.  Then there exists a solution in the
class $L^\infty([0,\infty);H^s)$. This solution is unique in the class
$L^3_{\textup{loc}}([0,\infty);W^{\beta/3,3})$.
\end{thm}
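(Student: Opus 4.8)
The plan is to deduce global existence from a uniform-in-time a priori bound in $H^s$ combined with the local theory, and to obtain uniqueness by a weak--strong argument that exploits the dyadic cancellation. First I would apply Proposition~\ref{p:local_exist_uniq}: since $x\in H^s$ and $g\in H^{-s}$, there is an $\eta>0$ depending only on $\|x\|_{H^s}$ and a unique local solution $X\in L^\infty([0,\eta];H^s)$. The point that makes this regularity class compatible with the energy theory is the embedding $H^s\hookrightarrow W^{\beta/3,3}$ for $s>\beta/3$: setting $b_n=2^{sn}|x_n|$ one has $\sum_n2^{\beta n}|x_n|^3=\sum_n2^{(\beta-3s)n}b_n^3\le(\sup_nb_n)\sum_nb_n^2\le\|x\|_{H^s}^3$, since $\beta-3s<0$ and $\sup_nb_n\le\|x\|_{H^s}$. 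Hence every $L^\infty([0,T];H^s)$ solution belongs to $L^3([0,T];W^{\beta/3,3})$, so by Proposition~\ref{p:L^3W^3_no_anom_diss} it satisfies the energy equality, and in particular the energy inequality on every $[0,t]$.

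This is precisely the input required by the bounding-sequence machinery. Letting $y$ be the bounding sequence for $x$, Lemma~\ref{l:X^2<=y_n} gives $X_n^2(t)\le y_n$ for all $n$ and all $t$ in the interval of existence, while Lemma~\ref{l:bound_y_n} (whose hypotheses are exactly those of the theorem) yields $M^2:=\sum_n2^{2sn}y_n<\infty$. Consequently $\|X(t)\|_{H^s}^2=\sum_n2^{2sn}X_n^2(t)\le M^2$ uniformly in $t$. Since the local existence time in Proposition~\ref{p:local_exist_uniq} depends only on the $H^s$ norm, this uniform bound excludes blow-up: if the maximal existence time $T^*$ were finite, one could restart the local solution from a time just below $T^*$, where the norm is still at most $M$, with a fixed time-step, and glue by local uniqueness, contradicting maximality. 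Thus the solution extends to $[0,\infty)$ with $X\in L^\infty([0,\infty);H^s)$.

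For uniqueness I would argue weak--strong. Let $X$ be the solution just constructed and $Y$ any solution in $L^3_{\textup{loc}}([0,\infty);W^{\beta/3,3})$ with the same datum; both satisfy the energy equality by Proposition~\ref{p:L^3W^3_no_anom_diss}. Writing $W=X-Y$ and forming the balance for $\tfrac12\tfrac{d}{dt}\|W\|_H^2$, the decisive algebraic fact is that, after reindexing $\phi_{n-1}k_{n-1}(\cdot)$ against $\phi_nk_n(\cdot)$, the terms cubic in $W$ cancel, leaving
\[
  \tfrac12\tfrac{d}{dt}\|W\|_H^2
    = \sum_n\phi_nk_nX_nW_nW_{n+1}-\sum_n\phi_nk_nX_{n+1}W_n^2-\sum_n\tfrac{k_n}{g_n}W_n^2,
\]
so that only the smooth reference solution $X$ appears in the nonlinearity. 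I would then bound $\sum_nk_n|X_n|W_n^2$ and the analogous shifted sums by splitting into low and high frequencies: for $n\le N$ the factor $k_n|X_n|\le\|X\|_{H^s}2^{(\beta-s)n}$ is dominated by a constant $C_N$ (uniform in $t$ because $\|X(t)\|_{H^s}\le M$), whereas for $n>N$ one writes $k_n|X_n|W_n^2=(g_n|X_n|)\frac{k_n}{g_n}W_n^2$ and uses $|X_n|\le M2^{-sn}$ together with $g_n2^{-sn}\to0$ --- a consequence of $g\in H^{-s}$ --- to make $g_n|X_n|$ as small as desired, absorbing the high-frequency part into the dissipation $\sum_n\frac{k_n}{g_n}W_n^2$. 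This produces $\frac{d}{dt}\|W\|_H^2\le C_N\|W\|_H^2$, and since $W(0)=0$ Gr\"onwall forces $W\equiv0$.

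The main obstacle is this uniqueness step, on two counts. First, the energy balance for $W$ must be justified rigorously, which rests on both solutions satisfying the energy equality and on an honest differentiation of $\sum_nW_n^2$. Second, and more to the point of the paper, the frequency splitting works only because of the decay $g_n2^{-sn}\to0$ coming from $g\in H^{-s}$: this is exactly what allows the supercritical (weak) dissipation $\sum_n\frac{k_n}{g_n}W_n^2$ to still absorb the nonlinear high-frequency contribution, even though it is far weaker than the critical dissipation $\sum_nk_nW_n^2$. By contrast the existence part is routine once Lemmas~\ref{l:X^2<=y_n} and~\ref{l:bound_y_n} are available.
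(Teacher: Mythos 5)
Your proposal is correct, and its existence half follows the paper's proof essentially verbatim: local solution from Proposition~\ref{p:local_exist_uniq}, the embedding $H^s\hookrightarrow W^{\beta/3,3}$ for $s>\beta/3$ (used implicitly in the paper, verified explicitly by you), the energy equality from Proposition~\ref{p:L^3W^3_no_anom_diss}, the uniform bound $\sup_t\|X(t)\|_{H^s}^2\leq\sum_n 2^{2sn}y_n<\infty$ from Lemmas~\ref{l:X^2<=y_n} and~\ref{l:bound_y_n}, and a continuation argument to rule out finite-time blow-up.

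Where you genuinely diverge is uniqueness. The paper never runs a Gr\"onwall or weak--strong estimate: it observes that any competitor $Y\in L^3_{\textup{loc}}([0,\infty);W^{\beta/3,3})$ itself satisfies the energy equality by Proposition~\ref{p:L^3W^3_no_anom_diss}, so the bounding-sequence machinery applies to $Y$ directly --- Lemma~\ref{l:X^2<=y_n} gives $Y_n^2(t)\leq y_n$ and Lemma~\ref{l:bound_y_n} then upgrades $Y$ to $L^\infty_{\textup{loc}}([0,\infty);H^s)$ --- after which the fixed-point uniqueness of Proposition~\ref{p:local_exist_uniq}, iterated over time steps, concludes. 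This bootstrap costs nothing beyond lemmas already proved for arbitrary weak solutions satisfying the energy inequality, and it sidesteps precisely what you identify as the hard part of your route: the honest term-by-term differentiation of $\sum_n W_n^2$ and the series rearrangement behind the cancellation of the cubic terms, which would have to be justified via partial sums and vanishing flux terms in the style of the proof of Proposition~\ref{p:L^3W^3_no_anom_diss}. Your route is viable --- the cancellation identity is correct, and the low/high frequency splitting, with $g_n2^{-sn}\to0$ (a genuine consequence of $g\in H^{-s}$) allowing the weak dissipation $\sum_n\frac{k_n}{g_n}W_n^2$ to absorb the high-frequency nonlinear terms, does close the Gr\"onwall inequality --- and it would additionally yield continuous dependence on the data in $H$, which the paper's argument does not provide; but as written it remains a sketch at its crucial analytic step, whereas the paper's bootstrap is complete with no further analysis needed.
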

\begin{proof} 
Let $T>0$ be the maximal time of existence in $H^s$ of the
solution provided by Proposition~\ref{p:local_exist_uniq}.
In particular, $X\in L^\infty([0,t];H^s)$ for all
$t<T$ and, since $s>\beta/3$, 
$X\in L^3([0,t];W^{\beta/3,3})$ for all $t<T$.
Hence, by Proposition~\ref{p:L^3W^3_no_anom_diss}, $X$ satisfies
the energy equality on $[0,t]$.
Lemma~\ref{l:X^2<=y_n} applies, so if $y$ denotes the bounding
sequence for $x$, we have
\begin{equation}\label{e:Xn2t<=yn}
X_n^2(t)\leq y_n
,\qquad n\geq1,\quad t\in[0,T).
\end{equation}
By Lemma~\ref{l:bound_y_n}
\[
\sup_{t\in[0,T)}\|X(t)\|_{H^s}^2
\leq\sum_{n\geq1}2^{2sn}y_n<\infty.
\]
If $T=\infty$ we just proved $X\in L^\infty([0,\infty);H^s)$. Suppose
  by contradiction that $T<\infty$. Then the bound
  in~\eqref{e:Xn2t<=yn} can be extended to $t\in[0,T]$ by the
  continuity of $X_n$ hence again by Lemma~\ref{l:bound_y_n}, $X(T)\in
  H^s$ and it would be possible to apply
  Proposition~\ref{p:local_exist_uniq}, in contradiction with
  the maximality of $T$.

Finally we turn to uniqueness in
$L^3_{\textup{loc}}([0,\infty);W^{\beta/3,3})$. By
  Proposition~\ref{p:L^3W^3_no_anom_diss}, Lemma~\ref{l:X^2<=y_n} and
  Lemma~\ref{l:bound_y_n}, if $X$ is a solution of class
  $L^3_{\textup{loc}}([0,\infty);W^{\beta/3,3})$, then $X$ is also of
    class $L^\infty_{\textup{loc}}([0,\infty);H^s)$, hence by
      Proposition~\ref{p:local_exist_uniq} it is unique.
\end{proof}
\subsection{Additional remarks}\label{s:remarks}

The last part of the paper is devoted to some final remarks about
our results. They have been collected here in order to give a 
more complete understanding of the problem,
while focusing, in the main body of the paper, on the assumptions
corresponding to those of~\cite{Tao2009}.
%
\subsubsection{A useful generalization}
\label{s:generalization_to_nonconstant_phi}

The results presented in the previous sections allow for more general
coefficients $\phi$. Namely, assume that
\begin{equation}\label{e:gen}
  \phi_n
    = \phi_n(t, X_{n-m},X_{n-m+1},\dots,X_{n+m}),
\end{equation}
for all $n\geq1$, where $m\geq1$ is a fixed integer. For convenience
we set $X_{-m}=X_{-m+1}=\dots=X_0=0$.
Assume moreover that the functions $(\phi_n)_{n\geq1}$ are uniformly bounded
and uniformly Lipschitz. This ensures that the local existence and
uniqueness theorem (Proposition~\ref{p:local_exist_uniq}) still holds.
In Proposition~\ref{p:L^3W^3_no_anom_diss} and Lemma~\ref{l:X^2<=y_n}
we only use the uniform boundedness, while
lemmas~\ref{l:yn_goes_zero},~\ref{l:bound_yn_nk}-\ref{l:bound_y_n}
deal only with bounding sequences.

The above model has a nice application to the averaged Navier-Stokes
system studied by Tao in~\cite{Tao2014}. By making a special average
on the trasport of the NS equations, the author derives a
vector-valued dyadic system, very similar to~\eqref{e:main} but with
four component for each $n$. A general version of this averaged system
is
\begin{equation}\label{e:averaged_NS}
  \begin{cases}
    X_{1,n}'
      = -\frac{k^\alpha_n}{g_n}X_{1,n}
      +k^\gamma_{n}\left(-C_1X_{3,n}X_{4,n}-C_2X_{1,n}X_{2,n}
      -C_3X_{1,n}X_{3,n}+C_4X^2_{4,n-1}\right),\\
    X_{2,n}'
      = -\frac{k^\alpha_n}{g_n}X_{2,n}
      +k^\gamma_{n}\left(C_2X^2_{1,n}-C_5X^2_{3,n} \right),\\
    X_{3,n}'
      = -\frac{k^\alpha_n}{g_n}X_{3,n}
      +k^\gamma_{n}\left(C_3X^2_{1,n}+C_5X_{2,n}X_{3,n}\right),\\
    X_{4,n}'
      = -\frac{k^\alpha_n}{g_n}X_{4,n}
      +k^\gamma_{n} C_1X_{1,n}X_{3,n}
      -k^\gamma_{n+1} C_4X_{4,n}X_{1,n+1},\\
    X_{\cdot,n}(0)
      = x_{\cdot,n},
  \end{cases}
\end{equation}
for all $t>0$ and $n\geq1$.

Here $X=(X_{i,n})_{i\in\{1,2,3,4\},n\geq1}$ is a family of real
functions, $X_{i,n}:[0,\infty)\to\mathbb R$; $X_{\cdot,0}\equiv0$;
  $x=(x_{i,n})_{i\in\{1,2,3,4\},n\geq1}$ is the given initial
  condition, $k_n=2^{\beta n}$ with $\beta>0$, and $C_1,\ldots,C_5$
  are five real constant.

In the framework of Navier-Stokes equations the constants $\alpha=2$
and $\gamma=\frac{5}{2}$ give a strictly supercritical regime.
In~\cite{Tao2014} the author shows that this system with a suitable
initial condition develops a singularity.  For
system~\eqref{e:averaged_NS} the critical regime is for
$\alpha=\gamma$ and $g\equiv 1$ (it is the regime in which the
trasport effects are of the same order of the dissipative effect)
whereas the logarithmically supercritical regime congectured
in~\cite{Tao2009} is given by $\alpha=\gamma$ and $g$ such that
$\sum_n g_n^{-1}=\infty$.

The latter case can be included in our model~\eqref{e:main} with
general coefficients~\eqref{e:gen}. Indeed, by summing up the
components $X^2_n:=\sum_{i=1}^4 X^2_{i,n}$ one gets
\begin{align*}
\frac12\frac d{dt}X_n^2
&=-\frac{k^\alpha_n}{g_n}X_n^2+C_4(k_n^\gamma X_{4,n-1}^2X_{1,n}-k_{n+1}^\gamma X_{4,n}^2X_{1,n+1})\\
&=-\frac{k^\alpha_n}{g_n}X_n^2+\phi_nk_n^\gamma X_{n-1}^2X_n-\phi_{n+1}k_{n+1}^\gamma X_n^2X_{n+1}
\end{align*}
and this can be reduced to the system~\eqref{e:main}, when
$\alpha=\gamma$ by a suitable choice of $\beta$ and appropriate
functions $\phi_n(t)$ depending on $n$ and $t$ and uniformly bounded.

\subsubsection{Conditions for smoothing}

Here we study the smoothing effect of the dissipative part. We work
under the assumptions of Theorem~\ref{thm:main}.
\paragraph{The linear operator.}

Consider the system $Z_n' = -\frac{k_n}{g_n}Z_n$, $n\geq1$,
the linear part of~\eqref{e:main}.
\begin{lem}
  Assume additionally that $\frac{ng_n}{k_n}\to0$. If $x\in\ell^2$
  and $Z$ is the solution starting at $x$, then
  $Z(t)\in L^\infty([\epsilon,\infty);H^s)$ for every
  $\epsilon>0$ and every $s>0$.
\end{lem}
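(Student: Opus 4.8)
The plan is to exploit that the linear system is completely decoupled: each equation $Z_n'=-\frac{k_n}{g_n}Z_n$ involves only $Z_n$, so it integrates explicitly to $Z_n(t)=x_ne^{-\frac{k_n}{g_n}t}$, which is the unique solution starting at $x$. The $H^s$ norm then reduces to a single scalar series,
\[
\|Z(t)\|_{H^s}^2=\sum_{n\geq1}2^{2sn}|x_n|^2e^{-2\frac{k_n}{g_n}t},
\]
and the whole statement amounts to bounding this series uniformly for $t\geq\epsilon$.

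Since each $k_n/g_n>0$, the factor $t\mapsto e^{-2\frac{k_n}{g_n}t}$ is non-increasing, so for $t\geq\epsilon$ I would replace $t$ by $\epsilon$ and set $c_n:=2^{2sn}e^{-2\frac{k_n}{g_n}\epsilon}$, obtaining
\[
\sup_{t\geq\epsilon}\|Z(t)\|_{H^s}^2\leq\sum_{n\geq1}c_n|x_n|^2\leq\Bigl(\sup_{n\geq1}c_n\Bigr)\|x\|_{\ell^2}^2.
\]
Because $x\in\ell^2$, it then suffices to prove that $\sup_n c_n<\infty$, and this follows if $c_n\to0$, since only finitely many terms would then exceed any fixed bound.

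To show $c_n\to0$, write $c_n=\exp\bigl(2sn\log2-2\epsilon\,k_n/g_n\bigr)$ and use the hypothesis $ng_n/k_n\to0$, which is exactly the statement that $\frac{k_n}{ng_n}\to\infty$, i.e. $k_n/g_n$ grows superlinearly. Hence for every $M>0$ there is $N$ with $k_n/g_n\geq Mn$ for $n\geq N$; choosing $M$ so large that $2\epsilon M>2s\log2$ makes the exponent at most $n(2s\log2-2\epsilon M)\to-\infty$, so $c_n\to0$ exponentially. This yields the uniform bound and hence $Z\in L^\infty([\epsilon,\infty);H^s)$. There is no real obstacle here: the decoupling trivializes existence and uniqueness, and the only quantitative point is that after a fixed positive time the dissipation rate $k_n/g_n$ must overcome the weight $2^{2sn}$, which is guaranteed by the superlinear growth. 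The one thing to keep explicit is that $M$ (and thus $N$) may depend on $\epsilon$ and $s$ but not on $t$, so that the estimate is genuinely an $L^\infty$ control on $[\epsilon,\infty)$ rather than a merely pointwise one.
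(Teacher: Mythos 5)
Your proof is correct and follows essentially the same route as the paper's one-line argument: solve the decoupled linear system explicitly, then use the hypothesis $\frac{ng_n}{k_n}\to0$ (i.e.\ superlinear growth of $k_n/g_n$) to show that the weight $2^{2sn}\mathrm{e}^{-2k_n\epsilon/g_n}$ is bounded in $n$, which together with $x\in\ell^2$ gives a bound uniform over $t\geq\epsilon$. You simply fill in, carefully and correctly, the quantitative details the paper compresses into ``clearly.''
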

\begin{proof}
  Clearly $Z_n(t) = x_n\exp\bigl(-\frac{k_n}{g_n}t\bigr)$
  and $\sup_n (2^{sn}Z_n(t))<\infty$ for all $s>0$, $t>0$
  if and only if $\frac{ng_n}{k_n}\to0$.
\end{proof}
\begin{rem}
  If $\frac{ng_n}{k_n}\not\to0$, the linear dissipation may not
  have a smoothing effect. Indeed, it is easy to construct
  a counterexample. Choose $n_1\geq1$ and set
  $n_{p+1} = n_p2^{\beta k_{n_p}/n_p}$, $p\geq1$,
  $g_{n_p} = k_{n_p}/n_p$, and define
  $g_n = g_{n_p}$ for $n_p\leq n\leq m_p$,
  and $g_n = g_{n_p}k_n/k_{m_p}$ for $m_p+1\leq n< n_{p+1}$,
  where $m_p = n_p + k_{n_p}/n_p$ It is easy to verify
  that $(g_n)_{n\geq1}$ satisfies our standing assumptions and
  that there are sequences $(x_n)_{n\geq1}\in\ell^2$ such that the
  corresponding solution $Z$ is not smooth.
\end{rem}
\paragraph{Smoothing by dissipation.}

We now analyse the smoothing effect for the non--linear equation.
Our final result is the following.
\begin{thm}
  Assume additionally that $\frac{ng_n}{k_n}\to0$. If $s>\beta$
  and $X$ is a solution such that $X(0)\in H^s$ and
  $X\in L^\infty([0,T];H^s)$, then
  $X\in L_\textup{loc}^\infty((0,T];H^s)$ for every $s>0$.
\end{thm}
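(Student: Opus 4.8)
The plan is to run a parabolic bootstrap on the Duhamel form of~\eqref{e:main}, exploiting the strong linear damping encoded in $\frac{ng_n}{k_n}\to0$. I write the hypothesis as $X\in L^\infty([0,T];H^{s_0})$ for some $s_0>\beta$, and I aim to produce, for every $\sigma>0$ and every $\epsilon\in(0,T]$, a finite bound for $\sup_{t\in[\epsilon,T]}\|X(t)\|_{H^\sigma}$. Since a weak solution satisfies~\eqref{e:main}, it satisfies the variation-of-constants identity already used in the proof of Proposition~\ref{p:local_exist_uniq}: for $0\le t_0\le t$,
\[
X_n(t)=X_n(t_0)e^{-\frac{k_n}{g_n}(t-t_0)}+\int_{t_0}^t e^{-\frac{k_n}{g_n}(t-u)}\bigl[\phi_{n-1}k_{n-1}X_{n-1}^2(u)-\phi_nk_nX_n(u)X_{n+1}(u)\bigr]\,du.
\]
I will work with the weighted sup-norms $\|X(t)\|_a:=\sup_{n\ge1}2^{an}|X_n(t)|$, which are sandwiched by Sobolev norms: $\|X(t)\|_a\le\|X(t)\|_{H^a}$, while $\|X(t)\|_{H^b}\le C\|X(t)\|_a$ whenever $b<a$. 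Two consequences of $\frac{ng_n}{k_n}\to0$ are used repeatedly: first, for every fixed $c>0$ and $\delta>0$ the quantity $2^{cn}e^{-\frac{k_n}{g_n}\delta}$ is bounded in $n$ (indeed it tends to $0$), because $\frac{k_n}{g_n}$ dominates $n$; second, $\frac{g_n}{k_n}\le\frac1n$ for all $n$ large.

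The heart of the matter is a single smoothing step. Suppose $\sup_{t\in[\tau,T]}\|X(t)\|_a\le K$ for some $a>\beta$, fix $\delta>0$ and let $t\ge\tau+\delta$. In the Duhamel identity with $t_0=\tau$ the data term is controlled by $2^{a'n}|X_n(\tau)|e^{-\frac{k_n}{g_n}(t-\tau)}\le K\,2^{(a'-a)n}e^{-\frac{k_n}{g_n}\delta}$, bounded in $n$ by the first consequence above. For the nonlinear term, $|X_m(u)|\le K2^{-am}$ gives $|\phi_{n-1}k_{n-1}X_{n-1}^2-\phi_nk_nX_nX_{n+1}|\le C\|\phi\|_{\ell^\infty}K^2\,2^{(\beta-2a)n}$, and the damping kernel integrates to $\int_\tau^t e^{-\frac{k_n}{g_n}(t-u)}\,du\le\frac{g_n}{k_n}$; multiplying by $2^{a'n}$ yields a contribution $\lesssim K^2\,2^{(a'+\beta-2a)n}\frac{g_n}{k_n}$. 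Choosing $a'=2a-\beta$ makes the exponent vanish, and then $\frac{g_n}{k_n}\le\frac1n$ keeps everything bounded in $n$. Hence $\sup_{t\in[\tau+\delta,T]}\|X(t)\|_{2a-\beta}<\infty$: one step upgrades regularity $a$ to $2a-\beta$, a strictly positive gain precisely because $a>\beta$.

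It remains to iterate. Starting from $a_0=s_0>\beta$, the recursion $a_{j+1}=2a_j-\beta$ gives $a_j=2^j(s_0-\beta)+\beta\to\infty$, so finitely many steps surpass any target. Given $\sigma>0$ and $\epsilon\in(0,T]$, choose $K\in\N$ with $a_K>\sigma$ and set $\tau_j=j\epsilon/K$. The hypothesis gives $\sup_{[0,T]}\|X\|_{a_0}\le\sup_{[0,T]}\|X\|_{H^{s_0}}<\infty$; applying the smoothing step on $[\tau_{j-1},T]$ with $\delta=\epsilon/K$ propagates a finite $\|\cdot\|_{a_j}$ bound to $[\tau_j,T]$. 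After $K$ steps we obtain $\sup_{t\in[\epsilon,T]}\|X(t)\|_{a_K}<\infty$, and since $\sigma<a_K$ the sandwiching inequality gives $\sup_{t\in[\epsilon,T]}\|X(t)\|_{H^\sigma}<\infty$. As $\sigma$ and $\epsilon$ are arbitrary, $X\in L^\infty_{\textup{loc}}((0,T];H^\sigma)$ for every $\sigma>0$.

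The main obstacle is the bookkeeping in the single step: one must check that the gain forced by the nonlinearity, $a\mapsto 2a-\beta$, stays compatible with the damping, i.e. that the exponent $a'+\beta-2a$ can be sent to $0$ while the kernel integral contributes only the factor $\frac{g_n}{k_n}$. This is exactly where both facets of $\frac{ng_n}{k_n}\to0$ are needed — the super-exponential decay for the data term and $\frac{g_n}{k_n}\le\frac1n$ for the forcing term — and where $s_0>\beta$ enters, ensuring a strictly positive (indeed geometrically growing) gain so the bootstrap terminates in finitely many steps.
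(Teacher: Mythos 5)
Your proof is correct and follows essentially the same route as the paper's: a Duhamel-based bootstrap in which $\frac{ng_n}{k_n}\to0$ tames the data term through the super-exponentially decaying semigroup and the nonlinear term through the kernel factor $g_n/k_n$, yielding the regularity gain $a\mapsto 2a-\beta$ per step and finishing by iteration since $s>\beta$. The only differences are cosmetic: you work with weighted sup-norms at the endpoint exponent $2a-\beta$ (using $g_n/k_n\le 1/n$) and restart Duhamel at intermediate times $\tau_j$, whereas the paper's lemma works in $H^{s_2}$ with the strict inequality $s_2<2s_1-\beta$ and encodes the degeneracy at $t=0$ in a weight function $\varphi$.
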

The theorem follows immediately from the following lemma.
\begin{lem}
  Under the same assumptions of the previous theorem,
  let $s_1>\beta$ and $s_2\in(s_1,2s_1-\beta)$.
  If $X$ is a solution such that $X(0)\in H^{s_1}$
  and $X\in L^\infty([0,T];H^{s_1})$, then
  $X\in L_\textup{loc}^\infty((0,T];H^{s_2})$.  
  More precisely, there is a non--decreasing
  upper semi--continuous function $\varphi:(0,\infty)\to\mathbf{R}$
  such that $\varphi$ is continuous in $0$ with $\varphi(0) = 0$,
  and
  \[
    \sup_{t\in[0,T]}\bigl(\varphi(t)\|X(t)\|_{H^{s_2}}\bigr)
      <\infty.
  \]
\end{lem}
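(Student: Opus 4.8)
The plan is to abandon energy and differential--inequality estimates for $\|X\|_{H^{s_2}}$ in favour of the mild (Duhamel) formulation, which separates cleanly the smoothing of the linear dissipation from the contribution of the nonlinearity and avoids the boundary--flux and weak--coercivity issues one meets in the energy approach. Write $r_n:=k_n/g_n$ and $M:=\|X\|_{L^\infty([0,T];H^{s_1})}$, so that $|X_j(u)|\le M\,2^{-s_1 j}$ for a.e.\ $u\in[0,T]$ and every $j$. Since each $X_n$ is differentiable and solves~\eqref{e:main}, integrating the linear ODE $\frac{d}{dt}(e^{r_n t}X_n)=e^{r_n t}\mathcal N_n$, with $\mathcal N_n:=\phi_{n-1}k_{n-1}X_{n-1}^2-\phi_n k_n X_n X_{n+1}$, gives the representation
\[
  X_n(t)=e^{-r_n t}x_n+\int_0^t e^{-r_n(t-u)}\mathcal N_n(u)\,du=:Z_n(t)+W_n(t).
\]
I would then estimate $\|Z(t)\|_{H^{s_2}}$ and $\|W(t)\|_{H^{s_2}}$ separately.

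For the nonlinear part $W$ the decisive observation is that the exponent $s_2+\beta-2s_1$ is negative, precisely because $s_2<2s_1-\beta$. Using $|X_j(u)|\le M\,2^{-s_1 j}$ and $k_{n-1}\le k_n=2^{\beta n}$ one gets, for a.e.\ $u$,
\[
  2^{s_2 n}|\mathcal N_n(u)|\le C\,\|\phi\|_{\ell^\infty}M^2\,2^{(s_2+\beta-2s_1)n},
\]
so that $2^{s_2 n}|W_n(t)|\le C\|\phi\|_{\ell^\infty}M^2\,2^{(s_2+\beta-2s_1)n}\int_0^t e^{-r_n(t-u)}\,du\le C\|\phi\|_{\ell^\infty}M^2\,r_n^{-1}2^{(s_2+\beta-2s_1)n}$. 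Squaring and summing in $n$, the geometric series $\sum_n 2^{2(s_2+\beta-2s_1)n}$ converges, whence $\sup_{t\in[0,T]}\|W(t)\|_{H^{s_2}}\le C M^2<\infty$. Thus the nonlinear contribution already lives in $H^{s_2}$, uniformly on all of $[0,T]$, with no need of positive--time smoothing; this is where the algebra of the quadratic nonlinearity together with $s_2<2s_1-\beta$ does the work, and the dissipation enters only through the trivial bound $\int_0^t e^{-r_n(t-u)}\,du\le r_n^{-1}$.

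For the linear part, $\|Z(t)\|_{H^{s_2}}^2=\sum_n 2^{2s_2 n}x_n^2 e^{-2r_n t}\le M^2\sum_n 2^{2(s_2-s_1)n}e^{-2r_n t}$. Here I would invoke the standing assumption $\tfrac{n g_n}{k_n}\to0$, that is $r_n/n\to\infty$: for each fixed $t>0$ the factor $e^{-2r_n t}$ eventually dominates $2^{2(s_2-s_1)n}$, so the series converges and, being non--increasing in $t$, is locally bounded on $(0,T]$ while it may diverge as $t\to0^+$ only if $x\notin H^{s_2}$. Combining the two bounds gives $\|X(t)\|_{H^{s_2}}\le\Psi(t)$ with $\Psi(t):=M\bigl(\sum_n 2^{2(s_2-s_1)n}e^{-2r_n t}\bigr)^{1/2}+CM^2$, a function continuous and non--increasing on $(0,T]$ and finite for every $t>0$; in particular $X\in L^\infty_{\mathrm{loc}}((0,T];H^{s_2})$.

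Finally, to produce the weight $\varphi$ I would set $\varphi(t):=\min\{t,\,1/\Psi(t)\}$. As a minimum of two non--decreasing functions it is non--decreasing; it is continuous on $(0,\infty)$ and $\varphi(t)\le t\to0$, so it extends continuously, hence upper semicontinuously, with $\varphi(0)=0$; and by construction $\varphi(t)\|X(t)\|_{H^{s_2}}\le\varphi(t)\Psi(t)\le1$. The only genuinely delicate point of the argument is the nonlinear estimate for $W$: one must check that the single derivative $k_n=2^{\beta n}$ lost in the nonlinearity is more than compensated by the two factors $2^{-s_1 n}$ coming from the $H^{s_1}$ control, which is exactly the quantitative content of $s_2<2s_1-\beta$; the full strength $r_n/n\to\infty$ of the dissipation is needed only for the linear term $Z$.
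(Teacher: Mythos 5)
Your proposal is correct and follows essentially the same route as the paper: the Duhamel (mild) representation, a uniform-in-time $H^{s_2}$ bound for the nonlinear term using $s_2<2s_1-\beta$ together with $g_n\lesssim k_n$, positive-time smoothing of the linear term via $ng_n/k_n\to0$, and then a weight $\varphi$ built from the resulting bound. The only differences are cosmetic: the paper factors the linear term as $\psi(t)\bigl(2^{s_1n}|X_n(0)|\bigr)$ with $\psi(t)=\sup_n 2^{(s_2-s_1)n}\mathrm{e}^{-k_nt/g_n}$ and takes $\varphi=1/\psi$, whereas you sum the series directly and take $\varphi=\min\{t,1/\Psi\}$.
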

\begin{proof}
  We have that
  \[
    2^{s_2n}X_n(t)
      = 2^{s_2n}\operatorname{e}^{-\frac{k_n}{g_n}t}X_n(0)
        + 2^{s_2n}\int_0^t\operatorname{e}^{-\frac{k_n}{g_n}(t-s)}
        (\phi_{n-1}k_{n-1}X_{n-1}^2 - \phi_n k_n X_nX_{n+1})\,ds,
  \]
  and consider the two terms on the right hand side separately.

  For the non--linear term, we use the inequality
  $|X_n(t)|\leq 2^{-s_1n}\|X\|_{L^\infty(H^{s_1})}$
  to get
  \begin{equation}
  \begin{array}{l}
   \Bigl|2^{s_2n}\int_0^t\operatorname{e}^{-\frac{k_n}{g_n}(t-s)}
        (\phi_{n-1}k_{n-1}X_{n-1}^2 - \phi_n k_n X_nX_{n+1})\,ds\Bigr|\leq\\
    \qquad\leq c\|X\|_{L^\infty(H^{s_1})}^2\|\phi\|_{\ell^\infty}2^{n(s_2-2s_1)}g_n
      \leq c\|X\|_{L^\infty(H^{s_1})}^2\|\phi\|_{\ell^\infty}2^{n(s_2-2s_1+\beta)}
      \in\ell^2,     
  \end{array}
  \end{equation}
  since $g_n\leq c k_n$ and, by the choice of $s_2$, $s_2-2s_1+\beta<0$.

  For the term with the initial condition we notice that
  \[
    2^{s_2n}\operatorname{e}^{-\frac{k_n}{g_n}t}|X_n(0)|
      = 2^{(s_2-s_1)n}\operatorname{e}^{-\frac{k_n}{g_n}t}
        \bigl(2^{s_1n}|X_n(0)|\bigr)
      \leq \psi(t)\bigl(2^{s_1n}|X_n(0)|\bigr)
      \in\ell^2,
  \]
  where $\psi(t) = \sup_n \bigl(2^{(s_2-s_1)n}
  \exp\bigl(-\frac{k_n}{g_n}t\bigr)\bigr)$.
  It is easy to check that $\psi$ is non--increasing,
  lower semi--continuous and $\psi(t)\uparrow\infty$
  as $t\downarrow0$. Choose $\varphi = 1/\psi$ to
  conclude the proof.
\end{proof}

\end{document}